\numberwithin{equation}{section}
\title{Quadratic forms and semiclassical eigenfunction hypothesis for flat tori }
\author{Naser T Sardari}
\date{Spring 2016}
	\newtheorem{thm}{Theorem}[section]
	\newtheorem{rem}[thm]{Remark}	
	\newtheorem{lem}[thm]{Lemma}
	\newtheorem{cor}[thm]{Corollary}
	\theoremstyle{defi}
	\theoremstyle{pf}
	\def\vol{\text{vol}}
\begin{document}
\maketitle

\begin{abstract}

 Let $Q(X)$ be any integral primitive positive definite quadratic form in $k$ variables where $k\geq4$  and discriminant $D$. We give an upper bound on the number of integral solutions of $Q(X)=n$ for any integer $n$ in terms of $n$, $k$ and $D$. As a corollary, we give a definite answer to a conjecture of Lester and Rudnick on the small scale equidistribution of orthonormal basis of eigenfunctions restricted to  an individual eigenspace on the flat torus $\mathbb{T}^d$ for $d\geq 5$.  Another application of our main theorem gives a sharp upper bound on $A_{d}(n,t)$, the number of representation of the  positive definite quadratic form $Q(x,y)=nx^2+2txy+ny^2$ as a sum of squares of $d\geq 5$ binary  linear forms where $n- n^{\frac{1}{(d-1)}-o(1)}< t < n$. This upper bound allows us to study the local statistics of integral points on sphere.  
\end{abstract}

\tableofcontents
\section{Introduction}

\subsection{Semiclassical eigenfunction hypothesis for flat tori}
We begin by describing  the main application of this paper.  Let $\mathbb{T}^d:= \mathbb{R}^d/\mathbb{Z}^d$ be the flat torus of dimension $d\geq2$ with the laplacian operator $\Delta:=\frac{1}{(2\pi)^2 }( \frac{{\partial}^2}{\partial \theta_1}+ \dots +\frac{{\partial}^2}{\partial \theta_d})$, and $\{ \psi_i  \}$ an orthonormal basis of eigenfunctions. Marklof and Rudnick \cite{Marklof} showed that for flat torus $\mathbb{T}^d$ there is a density one subsequence of any orthonormal basis $\{\psi_i\}$ such that $|\psi_i|^2$ converges weakly to the uniform distribution in $\mathbb{T}^d$, i.e. for any continuous function $f$ on $\mathbb{T}^d$ 

$$\int_{\mathbb{T}^d} |\psi_n(x)|^2f(x) d\vol(x) \rightarrow  \int_{\mathbb{T}^d} f(x) d\vol(x)   \text  {  as $n \to \infty$.  }$$
M.V. Berry \cite{Berry, Michael} in his work on the `` Semiclassical Eigenfunction Hypothesis'' suggested to go beyond this weak convergence, and study the equidistribution of $|\psi_n|^2$  on small scale. Hezari and Riviere \cite{Hamid} established the first result on the small scale equidistribution  on rational flat tori  for balls with a radius shrinking at a polynomial rate.  Motivated by their work, Lester and Rudnick \cite{Rudnick} proved the equidistribution of a density one subsequence of $\{\psi_i\}$ in an optimal small scale on the flat torus $\mathbb{T}^d$ for $d\geq 2$. Density one means if we order the eigenfunctions by their eigenvalue then the subsequence contains a density one subset of the basis. More precisely, they show that along a density one subsequence of the orthonormal basis $\{\psi_n \}$,

\begin{equation}\label{smallscale} \lim_{n\to \infty} \sup_{B(y,r)}\Big|\frac{1}{\vol (B(y,r))}\int_{B(y,r)} |\psi_n(x)|^2 d \vol(x)-1\Big|=0,\end{equation}
where $\psi_n$ is an eigenfunction with eigenvalue $\lambda_n$ and $B(y,r)$ is any ball of radius $r > \lambda_n^{-\frac{1}{2(d-1)}+o(1)}$ centered at an arbitrary point $x \in \mathbb{T}^d $. The exponent $-\frac{1}{2(d-1)}$, for the size of the ball,  is optimal.  Moreover, for dimension 3 and 4, they prove a stronger result. They prove the small scale equidistribution holds for almost every eigenfunction in individual eigenspace with the optimal exponent  $-\frac{1}{2(d-1)} $ and they conjecture that the same result holds for every $d\geq 5$ . The main application of this paper is to resolve this conjecture. In what follows, we explain their result and conjecture in detail and how it is reduced to counting pairs of integral points with small distance on sphere.

We note that if $d\geq 4$ then the set of eigenvalues of the laplacian $\Delta$ on $\mathbb{T}^d$ are given by the set of non-negative integers $\{n\in \mathbb{Z} : n\geq 0   \}$. We denote the associated eigenspace by $E_n$. It is well-known that the dimension of this space is equal to the number of integral points on sphere of radius $\sqrt{n}$ in $\mathbb{R}^d$.  Therefore, for $d \geq 5$, the multiplicity of each eigenvalue $n$ is large and  grow like a scalar multiple of $n^{\frac{d-2}{2}}$. The intersection of the orthonormal basis of eigenfunctions $\{ \psi_i\}$ with $E_n$ gives us an orthonormal basis of eigenfunctions for $E_n$. We denote this orthonormal basis of $E_n$ by $B_n:= E_n \cap B$. Lester and Rudnick \cite{Rudnick} prove that there exists a large subset $C_n \subset B_n$, which means $|C_n|=(1-o(1))|B_n|$, such that  along any sequence of $\{\psi_n : \psi_n \in C_n \}$
\begin{equation}\label{smallscale} \lim_{n\to \infty} \sup_{B(y,r)}\Big|\frac{1}{\vol (B(y,r))}\int_{B(y,r)} |\psi_n(x)|^2 d \vol(x)-1\Big|=o(1),\end{equation}
where $B(y,r)$ is any ball of radius $r > \lambda_n^{-\frac{1}{2(d-1)}+o(1)}$ centered at an arbitrary point $x \in \mathbb{T}^d $. This means that we can choose our density one subsequence so that it contains a large proportion of the basis restricted to each eigenspace. In this paper we prove the analogue of this result for $d\geq 5$.

\begin{cor} \label{small}Let $\mathbb{T}^d:= \mathbb{R}^d/\mathbb{Z}^d$ be a $d\geq5$ dimensional flat torus and $\{ \psi_i  \}$ an orthonormal basis of eigenfunctions for the laplacian operator. Let $E_n$ be the associated eigenspace for eigenvalue  $n \in\mathbb{Z} \geq 0$ and  $B_n:= E_n \cap $ $\{ \psi_i  \}$ be the restriction of the orthonormal basis $\{ \psi_i  \}$ to $E_n$. Then there exists a large subset $C_n \subset B_n$, which means $|C_n|=(1-o(1))|B_n|$, such that  along any sequence of $\{\psi_n : \psi_n \in C_n \}$
\begin{equation}\label{smallscale} \lim_{n\to \infty} \sup_{B(y,r)}\Big|\frac{1}{\vol (B(y,r))}\int_{B(y,r)} |\psi_n(x)|^2 d \vol(x)-1\Big|=o(1),\end{equation}
where $B(y,r)$ is any ball of radius $r > \lambda_n^{-\frac{1}{2(d-1)}+o(1)}$ centered at an arbitrary point $x \in \mathbb{T}^d $. The exponent $-\frac{1}{2(d-1)}$ is optimal. In fact there exists a sequence of balls with radius $r\approx \lambda_n^{-\frac{1}{2(d-1)}-o(1)}$ and an orthonormal basis $\{ \psi_{i}\}$ where  (\ref{smallscale}) does not hold for a positive proportion of eigenfunctions in $B_n$.

\end{cor}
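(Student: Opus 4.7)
\medskip

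\noindent\textbf{Proof proposal.} The plan is to reduce the statement to a second moment estimate on the eigenspace $E_n$, and then to a count of pairs of close integer lattice points on the sphere $|\xi|^2=n$, which is exactly the quantity the main theorem of the paper controls.

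\smallskip

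\noindent\emph{Step 1 (Fourier expansion and smooth cutoff).} Any $\psi\in E_n$ can be written as $\psi(x)=\sum_{|\xi|^2=n} c_\xi\, e^{2\pi i \xi\cdot x}$ with $\sum|c_\xi|^2=1$. Fix a Schwartz bump $\varphi\ge \mathbf{1}_{B(0,1)}$ with $\widehat\varphi$ compactly supported, and set $K_{y,r}(x):=r^{-d}\varphi((x-y)/r)$ on $\mathbb{T}^d$ (periodized). Since $K_{y,r}$ dominates $\vol(B(y,r))^{-1}\mathbf{1}_{B(y,r)}$ up to a harmless constant, and $\int K_{y,r}\,d\vol=\widehat\varphi(0)$ is independent of $y$, it suffices to bound
\begin{equation*}
\mathcal{E}(\psi,y,r):=\int_{\mathbb{T}^d} K_{y,r}(x)|\psi(x)|^2 d\vol(x)-\widehat\varphi(0).
\end{equation*}
Expanding gives $\mathcal{E}(\psi,y,r)=\sum_{\xi\ne\eta,\ |\xi|^2=|\eta|^2=n} c_\xi\overline{c_\eta}\,\widehat\varphi(r(\xi-\eta))\,e^{-2\pi i y\cdot(\xi-\eta)}$, which is concentrated on pairs with $|\xi-\eta|\lesssim 1/r$ because of the rapid decay of $\widehat\varphi$.

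\smallskip

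\noindent\emph{Step 2 (Variance over the basis via Hilbert--Schmidt).} For fixed $y,r$, the map $\psi\mapsto \mathcal{E}(\psi,y,r)$ is the quadratic form associated to the self-adjoint operator $T_{y,r}-\widehat\varphi(0)\,\mathrm{Id}$ on $E_n$, where $T_{y,r}$ has matrix $\widehat\varphi(r(\xi-\eta))e^{-2\pi i y\cdot(\xi-\eta)}$. For any orthonormal basis $B_n=\{\psi_j\}$ of $E_n$ and any traceless self-adjoint $A$, $\sum_j|\langle A\psi_j,\psi_j\rangle|^2\le \|A\|_{\mathrm{HS}}^2$. Applying this we obtain
\begin{equation*}
\frac{1}{|B_n|}\sum_{\psi\in B_n}|\mathcal{E}(\psi,y,r)|^2 \;\le\; \frac{1}{|B_n|}\sum_{\substack{|\xi|^2=|\eta|^2=n\\ \xi\ne \eta}} |\widehat\varphi(r(\xi-\eta))|^2.
\end{equation*}
Because $\widehat\varphi$ has compact support, the right-hand side is bounded by $\frac{1}{|B_n|}\#\{(\xi,\eta):|\xi|^2=|\eta|^2=n,\;0<|\xi-\eta|\le C/r\}$.

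\smallskip

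\noindent\emph{Step 3 (Lattice point count on spherical caps via the main theorem).} Writing $|\xi-\eta|^2=2(n-t)$ with $t=\xi\cdot\eta\in\mathbb Z$, pairs with $|\xi-\eta|\le C/r$ correspond to $n-C^2/(2r^2)<t<n$. The number of pairs with a given value of $t$ is precisely the quantity $A_d(n,t)$ from the abstract, counting representations of $Q(x,y)=nx^2+2txy+ny^2$ as a sum of $d$ squares of binary integral linear forms. With $r>\lambda_n^{-1/(2(d-1))+o(1)}$ we have $C^2/(2r^2)<n^{1/(d-1)-o(1)}$, so $t$ lies exactly in the range where the main theorem of the paper yields the sharp upper bound on $A_d(n,t)$. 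Summing over the allowed $t$ produces a bound of the shape $o(|B_n|)$ for the pair count, hence the above average of $|\mathcal{E}|^2$ is $o(1)$, uniformly in $y$.

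\smallskip

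\noindent\emph{Step 4 (Uniformity in $y$ and conclusion via Chebyshev).} To remove the supremum over $y$, note that $y\mapsto \mathcal{E}(\psi,y,r)$ is Lipschitz with constant at most $\|\nabla|\psi|^2\|_\infty/r\ll n^{O(1)}$, so discretizing $y$ on a $1/n^{A}$-net introduces only a polynomial loss. Taking a union bound over this net and applying Chebyshev's inequality to the second moment from Step 2, the set of $\psi\in B_n$ for which $\sup_{y}|\mathcal{E}(\psi,y,r)|>\epsilon$ has size $o(|B_n|)$ for every fixed $\epsilon>0$. Defining $C_n\subset B_n$ as the complement of the union over a sequence $\epsilon\downarrow 0$ yields the desired density-one subset. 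Optimality of the exponent $-\tfrac{1}{2(d-1)}$ is inherited from the Lester--Rudnick construction, which concentrates mass near a spherical cap on the integer sphere.

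\smallskip

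\noindent The main obstacle is Step 3: an elementary divisor count gives $A_d(n,t)\ll n^{(d-2)/2+o(1)}$, which is too weak by a factor of $r^{-(d-1)}$; the sharp bound on $A_d(n,t)$ for $t$ close to $n$, which is exactly the content of the paper's main quadratic forms theorem, is what makes the optimal exponent accessible.
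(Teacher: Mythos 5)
Your overall strategy -- smooth approximation of the indicator, Fourier expansion, reduction to a count of nearby lattice-point pairs on the sphere $|\xi|^2=n$, and then invoking the paper's main quadratic forms theorem via the $A_d(n,t)$ bound -- is exactly the Lester--Rudnick reduction that the paper cites and sketches, so the outline is right. But there are two issues, one minor and one that breaks Step 4 as written.

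\textbf{Minor gap (Step 1).} You only construct a majorant $\varphi\ge\mathbf{1}_{B(0,1)}$. This lets you prove $\vol(B)^{-1}\int_B|\psi|^2\le 1+o(1)$, but not the matching lower bound. You need a companion minorant $\varphi_-\le\mathbf{1}_{B(0,1)}$ with $\widehat{\varphi_-}$ compactly supported and $\int\varphi_-=1-o(1)$ (the Selberg minorant, which is what the paper's $a^-$ is). This is easily added.

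\textbf{Serious gap (Steps 2--4).} Your Hilbert--Schmidt estimate gives, for each \emph{fixed} $y$,
\[
\frac{1}{|B_n|}\sum_{\psi\in B_n}|\mathcal{E}(\psi,y,r)|^2\;\le\;\frac{1}{|B_n|}\sum_{\xi\neq\eta}\bigl|\widehat\varphi\bigl(r(\xi-\eta)\bigr)\bigr|^2\;\ll\;\langle\mu(\bullet;n,C/r)\rangle=:\delta_n,
\]
and Corollary~\ref{mainn} only guarantees $\delta_n\to 0$, typically at a sub-polynomial rate $n^{-o(1)}$ (because the statement allows $r$ arbitrarily close to the critical exponent $n^{-1/(2(d-1))}$). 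Your Step 4 then wants a union bound over a $1/n^A$-net of $y$'s, which has $\gg n^{dA}$ points (and cannot be coarser than spacing $\sim r$, so at least $n^{d/(2(d-1))-o(1)}$ points). Chebyshev at threshold $\epsilon$ gives a bad set of density $\delta_n/\epsilon^2$ for each net point, and $n^{dA}\cdot\delta_n/\epsilon^2\to 0$ fails for fixed $\epsilon$ when $\delta_n$ decays only like $n^{-o(1)}$. So the union bound does not close.

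\textbf{The fix (and what the paper actually does).} Do not pass to a second moment. Instead note that, for each $\psi$, the bound
\[
|\mathcal{E}(\psi,y,r)|\;\le\;\sum_{\xi\neq\eta}|c_\xi||c_\eta|\,\bigl|\widehat\varphi\bigl(r(\xi-\eta)\bigr)\bigr|\;\ll\;\sum_{0<|\xi-\eta|\le C/r}\bigl(|c_\xi|^2+|c_\eta|^2\bigr)
\]
is \emph{uniform in $y$}, because the only $y$-dependence is through unimodular phases. Hence $\sup_y|\mathcal{E}(\psi,y,r)|$ obeys the same bound, and summing over $\psi\in B_n$ using the Parseval identity $\sum_{\psi\in B_n}|c_\xi(\psi)|^2=1$ gives directly
\[
\frac{1}{|B_n|}\sum_{\psi\in B_n}\sup_y|\mathcal{E}(\psi,y,r)|\;\ll\;\langle\mu(\bullet;n,C/r)\rangle\;\longrightarrow 0.
\]
Now a single Markov/Chebyshev inequality, with no net and no union bound, yields $C_n$ with $|C_n|=(1-o(1))|B_n|$ on which $\sup_y|\mathcal{E}|=o(1)$. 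This is the first-moment argument implicit in the paper's sketch and in Lester--Rudnick, and it is what makes the $\sup_y$ harmless. Your Step 3, which is the genuinely new ingredient (the $A_d(n,t)$ bound from Theorem~\ref{main}), is correctly placed; the bookkeeping around it just needs to be the first moment, not a second moment plus a net.
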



%

  \subsection{Local statistics of integral points on sphere}
  In this section we explain Lester and Rudnick's \cite{Rudnick}[Remark 5.4] observation that the small scale equidistribution for individual eigenspace on flat tori is related to counting pairs of integral points with small distance on sphere. 
  
  Let $\psi \in B_n$ be an orthonormal function in our basis then we can write 

\begin{equation}\label{1}
\psi(\theta)=\sum_{\lambda \in \mathbb{Z}^d, \| \lambda\|^2=n} c_{\lambda}(\psi) e(\langle \lambda,\theta \rangle),
\end{equation}
where
 \begin{equation} \sum_{\lambda \in \mathbb{Z}^d, \| \lambda\|^2=n} \|c_{\lambda}(\psi)\|^2=1.\end{equation}
 Since $B_n$ is an orthonormal basis for $E_n$, similarly if we fix $\lambda \in \mathbb{Z}^d$ we also obtain
 \begin{equation}\label{orthogonality}
  \sum_{\psi \in B_n} \|c_{\lambda}(\psi)\|^2=1.
 \end{equation}
 Lester and Rudnick estimate the indicator function of the ball $B(x,r) $ by the majorant and minorant Beurling-Selberg trigonometric polynomials $a^{\pm} (\theta)$ on the flat torus $\mathbb{T}^d$ such that

\begin{eqnarray*}
a(\theta)^{\pm}&=&\sum_{\xi \in \mathbb{Z}^d} a(\xi)e(\langle \theta, \xi \rangle),
\\
a^{\pm}(0)^{\pm}&=& \vol(B(x,r))+O(r^{d-o(1)}),
\\
\hat{a}(\xi)^{\pm}&=&0 \text{ if }  |\xi|> n^{\frac{1}{2(d-1)}-o(1)},
\\
\big| \hat{a}(\xi)^{\pm} \big| &\leq & r^d.
\end{eqnarray*}
Then one can estimate from below and above the local integral (\ref{smallscale}) by using the trigonometric polynomials $a^{\pm}$ and reduce the problem into counting pairs of integral points with small distance on the sphere. We include a brief exposition of this reduction in what follows
\begin{eqnarray*}
\frac{\int_{\mathbb{T}^d} \| \psi \|^2 a^{\pm}(\theta) d\theta }{a^{\pm}(0)}-1&=& \int \big| \sum_{\lambda}c_{\lambda}(\psi)e( \langle \lambda,\theta \rangle )    \big|^2 \big(  \sum_{\xi} \frac{\hat{a}^{\pm}(\xi)}{\hat{a}^{\pm}(0)} e( \langle \xi,\theta \rangle )  \big) d\theta
\\
&=&\sum_{\lambda-\lambda^{\prime}=\xi} c_{\lambda}(\psi)\bar{c}_{\lambda^{\prime}} \frac{\hat{a}^{\pm}(\xi)}{\hat{a}^{\pm}(0)}
\\
&\ll& \sum_{0<\|\lambda-\lambda^{\prime}\|\leq n^{\frac{1}{2(d-1)}-o(1) }} \|c_{\lambda}(\psi)\|^2+ \|c_{\lambda^{\prime}}(\psi)\|^2.
\end{eqnarray*}
 Average this inequality over $\psi \in B_n$ and use the identity (\ref{orthogonality}) to obtain 
 \begin{eqnarray*}
 \sum_{\psi \in B_n }\Big[  \sup_{B(y,r)}\Big|\frac{1}{\vol (B(y,r))}\int_{B(y,r)} |\psi(x)|^2 d \vol(x)-1\Big| \Big]
 \\
 \leq \frac{1}{|B(n)|}\sum_{0<\|\lambda-\lambda^{\prime}\|\leq n^{\frac{1}{2(d-1)}-o(1) }} 1.
 \end{eqnarray*}
 Note that the right hand side of the above inequality is the average over the pair of integral points $\lambda\neq \lambda^{\prime}$ on the sphere of radius $\sqrt{n}$ where their distance is less than $n^{\frac{1}{2(d-1)}-o(1) }$. For a large integer $n$, let

$$E(n)=E_d(n)=\{x\in \mathbb{Z}^d :|x|^2 =n\},$$
be the set of integral lattice points on the sphere $\sqrt{n}S^{d-1}$ of radius $\sqrt{n}$. For any $Y \ll \sqrt{n}$, and $|x|^2 =n$, let

$$\text{cap}(x;n,Y):=\{y \in\sqrt{n}S^{d-1} , |x-y|\leq Y\},$$
be the spherical cap of size $Y$ around the point $x$ on the sphere $\sqrt{n}S^{d-1}$.
Given a point $x \in E(n)$, let 
$$\mu(x;n,Y):=|\text{cap}(x;n,Y)\cap \mathbb{Z}^d|-1,$$
be the number of other lattice points in the cap around $x$. The mean of $\mu(x; n,Y)$, averaged over all lattice points $E(n)$ is:
$$
\left< \mu(\bullet ;n,Y)\right>:=\frac{1}{|E(n)|}\sum_{x\in E(n)}\big(|\text{cap}(x;n,Y)\cap\mathbb{Z}^d |-1\big).
$$
We note that $|E(n)| \approx n^{\frac{d-2}{2}}$ when $d\geq 5$ and the volume of $\sqrt{n}S^{d-1}$ is $\approx n^{\frac{d-1}{2}}$. Heuristically,  if we assume that the integral points are uniformly distributed,  then we expect to have $\frac{Y^{d-1}}{n^{\frac{1}{2}}}$  integral points inside a cap of size $Y$. So, if $Y \ll n^{\frac{1}{2(d-1)}-o(1)}$ then we expect to have no integral points  inside a cap of size $Y$. On the other hand, if $n^{\frac{1}{2(d-1)}+o(1)}\ll Y$ then we expect to have many points. In fact, we have the following corollary of our main theorem that makes this heuristic rigorous.
\\

\begin{cor}\label{mainn}
Let $d\geq 5$. Assume that $Y$, the size of the caps,  satisfies 

$$n^{\frac{1}{2(d-1)}+o(1)}\ll Y \ll n^{1/2}.$$
Then the probability that a cap of size $Y$ centered at integral point has more than $\log(n)$ points is greater than $1/2$
$$\mathbb{P}\big[ \mu(\bullet;n,Y)>\log(n)\big]> 1/2,$$
as a result $$\left<\mu(\bullet;n,Y)  \right> \to \infty \text{ as } n\to \infty. $$
On the other hand, if  $Y \ll n^{\frac{1}{2(d-1)}-o(1)}$, then
$$\left<\mu(\bullet;n,Y)  \right> \to 0 \text{ as } n\to \infty.$$
\\

\end{cor}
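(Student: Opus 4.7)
The starting identity for both halves is
$$|E(n)| \cdot \langle \mu(\bullet;n,Y)\rangle \;=\; \#\{(x,y)\in E(n)^2 : 0 < |x-y|\leq Y\} \;=\; \sum_{n-Y^2/2 \leq t < n} A_d(n,t),$$
obtained by noting that for $x,y\in E(n)$ one has $|x-y|^2 = 2(n-\langle x,y\rangle)$, and that a pair $(x,y)\in E(n)^2$ with $\langle x,y\rangle = t$ is exactly a representation counted by $A_d(n,t)$: the $d$ coordinate-pairs $(x_i,y_i)$ give binary linear forms whose sum of squares is $Q(u,v) = nu^2 + 2tuv + nv^2$.

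\textbf{Upper-bound regime.} When $Y \ll n^{\frac{1}{2(d-1)}-o(1)}$ one has $n-t \ll n^{\frac{1}{d-1}-o(1)}$, which is exactly the range in which the main theorem of the paper supplies a sharp upper bound on $A_d(n,t)$. Substituting this bound and summing over the $O(Y^2)$ admissible integers $t$, then dividing by $|E(n)| \asymp n^{(d-2)/2}$, yields $\langle \mu\rangle = n^{-\epsilon+o(1)}$ for some $\epsilon>0$; this is a direct book-keeping step.

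\textbf{Lower-bound regime.} Here I need two ingredients. First, a matching lower bound $\langle\mu\rangle \gg Y^{d-1}/\sqrt{n}$, which I would derive from the Siegel mass formula: for $d\geq 5$ it gives $A_d(n,t)$ of order $(n^2-t^2)^{(d-3)/2}$ times a singular series, and summing over $t\in (n-Y^2/2, n)$ reproduces the heuristic $Y^{d-1}/\sqrt{n}$. Under the hypothesis $Y\gg n^{\frac{1}{2(d-1)}+o(1)}$ this is $\gg n^{\eta}$ for some $\eta>0$, in particular much larger than $\log n$. Second, to upgrade the mean bound to the probabilistic statement, I would run a second-moment analysis: $\sum_x \mu(x)^2$ decomposes into $\sum_x \mu(x)$ plus the count of triples $(x,y_1,y_2)\in E(n)^3$ of distinct points with $|y_i-x|\leq Y$; fixing the pair $(y_1,y_2)$ reduces the triple count to counting $x\in E(n)$ on the intersection of two spherical caps, controlled again by the main theorem (or by Siegel asymptotics on the codimension-two slice). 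Once $\mathrm{Var}(\mu) = o(\langle\mu\rangle^2)$, Chebyshev gives $\mu > \langle\mu\rangle/2$ on a set of density $1-o(1)$, and since $\langle\mu\rangle\gg \log n$, the claim follows.

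\textbf{Main obstacle.} The delicate step is the lower bound on $\langle\mu\rangle$ at the critical scale $Y\approx n^{\frac{1}{2(d-1)}+o(1)}$: individual $A_d(n,t)$ may fall below the Siegel asymptotic for exceptional $t$ where $n^2-t^2$ carries bad local conditions, and one must average over $t$ to smooth these out. The variance analysis is technically involved but structurally analogous to the upper-bound argument.
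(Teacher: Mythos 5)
Your upper-bound half (the regime $Y \ll n^{\frac{1}{2(d-1)}-o(1)}$) follows the paper's route exactly: convert $\langle\mu(\bullet;n,Y)\rangle$ into $\frac{1}{|E(n)|}\sum_{n-Y^2/2<t<n}A_d(n,t)$ by summing over the inner product $t=\langle x,y\rangle$, then substitute the bound $A_d(n,t)\ll n^{(d-3)/2+\epsilon}\gcd(n,t)^{1/2}(n-t)^{(d-3)/2}$ deduced from Theorem~\ref{main}. That part is correct.

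The lower-bound half is where you diverge from the paper, and where the proposal has a genuine gap. You want to establish $\langle\mu\rangle\gg Y^{d-1}/\sqrt n$ from Siegel-type asymptotics for $\sum_t A_d(n,t)$, then promote the mean bound to the stated probability bound by a second-moment/Chebyshev argument. Neither ingredient is actually carried out. The first is delicate for precisely the reason you flag yourself: a lower bound on individual $A_d(n,t)$ would require control of local densities at bad primes, and must be averaged over $t$ to avoid exceptional values; this is not a consequence of anything established in the paper. The second requires a bound on the number of triples $(x,y_1,y_2)\in E(n)^3$ with $y_1,y_2$ both in the cap around $x$; fixing $y_1,y_2$, one is counting lattice points on the intersection of the sphere with a codimension-two slab, which is not in the shape of the quadratic-form count supplied by Theorem~\ref{main}, so ``controlled again by the main theorem'' is not justified.

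The paper's actual lower-bound proof is entirely elementary and sidesteps all arithmetic. Call $p\in E(n)$ \emph{bad} if $\mu(p;n,Y)\leq\log n$, and suppose for contradiction that the number $B$ of bad points satisfies $B\geq\frac12|E(n)|\gg n^{(d-2)/2}$. Centre a spherical cap of radius $Y/2$ at each bad point. If some point of the sphere lay in more than $\log n$ of these caps, the triangle inequality would force one of the corresponding bad points to have more than $\log n$ other lattice points within distance $Y$, contradicting badness; so the caps cover each point at most $\log n$ times. A volume estimate then gives $B(Y/2)^{d-1}\ll n^{(d-1)/2}\log n$, i.e.\ $Y^{d-1}\ll n^{1/2}\log n$, contradicting $Y\gg n^{\frac{1}{2(d-1)}+o(1)}$. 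This covering-and-pigeonhole argument yields $\mathbb{P}[\mu>\log n]>1/2$ directly with no lower bound on $A_d(n,t)$ and no variance computation, and you should replace your Siegel-plus-Chebyshev sketch with an argument of this type.
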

\begin{rem}
Lester and Rudnick \cite{Rudnick}[Remark 5.4]  remarked that the small scale equidistributin for individual eigenspace as stated in  Corollary \ref{small} is implied from  $$\left<\mu(\bullet;n,Y)  \right> \to 0 \text{ as } n\to \infty,$$
where $Y \ll n^{\frac{1}{2(d-1)}-o(1)}$. Bourgain \cite{Rudnick}[Theorem 4.1] show that the exponent $-\frac{1}{2(d-1)}$ is optimal by using the fact that 
$$\mathbb{P}\big[ \mu(\bullet;n,Y)>2 \big]> 1/2. $$
Therefore corollary \ref{mainn} implies the corollary \ref{small}. 
\end{rem}

\subsection{Main theorem}\label{Main theorem}
%
We begin by introducing some notations. %
Let $Q(X)$ be an integral quadratic form where $X=(x_1,\dots,x_k)$ and define 
$$A:=\Big[\frac{\partial^2 Q}{\partial x_i \partial x_j}\Big],$$
then 
$$Q(X)=1/2X^T A X.$$
Let $D:=det (A)$ be the discriminant of $Q$. We write $r(Q,n)$ for the number of integral solutions of 
$$Q(X)=n.$$ 
We consider the Theta series associated to this quadratic form
$$\Theta(z)=\sum_{n}r(Q,n)e(nz).$$
This is a modular form of weight $k/2$ and level $N$, where $N$ is the smallest integer such that $NA^{-1}$ is an even integral matrix . By the theory of modular forms we can write $\Theta(z)$ uniquely as a sum of standard Eisenstein series $E(z,Q)$ (the Eisenstein series associated to $Q$) and a cusp form $F(z,Q)$

 $$\Theta(z)=E(z,Q)+F(z,Q).$$
 From this decomposition
$$r(Q,n)=\rho(n,Q)+\tau(n,Q).$$
  where, $\rho(n,Q)$ and $\tau(n,Q)$ are the $n$-th Fourier coefficients of $E(z,Q)$ and $F(z,Q)$ respectively. We use the spectral theory of automorphic forms and bounds on the Fourier coefficients of modular forms to prove Theorem (\ref{main}).  In our main theorem, we give a uniform upper bound on the number of integral points on a quadric that is defined by any positive definite integral primitive quadratic form  in  $k\geq 4$ variables and with discriminant $D$ . Similar uniform results in a different context (explicit bounds for representability by a quadratic form) has been extensively studied by various authors. In particular, there is PhD work of Hanke \cite{Hanke} who uses theta series to get estimates which are uniform in the coefficients and also the work of Schulze-Pillot \cite{Schulze}. More recently,  Browning and Deitmann \cite[Proposition 1]{Browning} established a result that recovers our theorem for  in the generic situation where the coefficients of the quadratic form is of order $D^{1/k}$. This result is not enough to establish the small scale equidistribution on rational flat tori.  We need a uniform result for all quadratic forms with discriminant $D$ that is stated in Theorem 1.4. We explain this in Remark \ref{remarks} after stating the our main theorem.
\begin{thm}\label{main}
Let $n$ be any integer, and let $Q(X)$ be any  primitive positive definite integral quadratic form of discriminant $D$ in  $k\geq 4$ variables. If  $D \ll n^{\frac{k-3}{2(k-2)}}$, then the number of integral solutions of $Q(X)=n$ is bounded from above by 
\begin{equation}
c_{\epsilon}\frac{n^{\frac{k-2}{2}}}{\sqrt{D}}\gcd(D,n)^{1/2}n^{\epsilon},
\end{equation}
where $c_{\epsilon}$ is a constant which depends only on $\epsilon$ and not on $Q(X)$ or $n$.
\\

\end{thm}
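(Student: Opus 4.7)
The plan is to exploit the orthogonal decomposition $r(Q,n)=\rho(n,Q)+\tau(n,Q)$ already recalled in the excerpt: $\rho(n,Q)$ is the Fourier coefficient of the Eisenstein part $E(z,Q)$ and is the expected main term, while $\tau(n,Q)$ comes from the cusp form $F(z,Q)$ and must be shown to be absorbed by the claimed bound under the hypothesis $D\ll n^{(k-3)/(2(k-2))}$. I would handle the two pieces with entirely different tools: a local-density computation for $\rho$, and spectral bounds on Fourier coefficients of integral or half-integral weight cusp forms for $\tau$.

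\textbf{The Eisenstein term.} For $\rho(n,Q)$ I would invoke the Siegel--Minkowski formula
$$\rho(n,Q) \;=\; c_{k}\,\frac{n^{(k-2)/2}}{\sqrt{D}}\;\prod_{p}\alpha_{p}(n,Q),$$
where $\alpha_{p}(n,Q)$ denotes the local density of solutions of $Q(x)\equiv n\pmod{p^{r}}$ and $c_{k}$ depends only on $k$. Good primes $p\nmid 2D$ satisfy $\alpha_{p}=1+O(p^{-k/2})$, so the tail of the Euler product is at most $n^{\epsilon}$. For bad primes $p\mid 2D$ a Hensel-lifting analysis should bound $\alpha_{p}(n,Q)$ by $O(p^{v_{p}(\gcd(D,n))/2+\epsilon})$, and the product over such $p$ produces precisely the announced factor $\gcd(D,n)^{1/2}n^{\epsilon}$.

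\textbf{The cuspidal term.} For $\tau(n,Q)$, the form $F(z,Q)$ lies in $S_{k/2}(\Gamma_{0}(N),\chi)$ with $N\ll D$. I would expand $F$ in an orthonormal Petersson basis $\{f_{j}\}$ and bound the Hecke eigenvalues $a_{f_{j}}(n)$ via Deligne's theorem when $k$ is even, and via the Iwaniec / Conrey--Iwaniec bound through the Shimura correspondence when $k$ is odd. Cauchy--Schwarz in this expansion yields
$$|\tau(n,Q)|\;\ll_{\epsilon}\; n^{(k-2)/4+\epsilon}\,\|F\|_{\mathrm{Pet}},$$
while $\|F\|_{\mathrm{Pet}}\leq \|\Theta\|_{\mathrm{Pet}}$ and $\|\Theta\|_{\mathrm{Pet}}^{2}$ can in turn be estimated in terms of $D$ by Rankin--Selberg unfolding combined with the Siegel mass formula.

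\textbf{Main obstacle.} The delicate step is quantitative uniformity in $D$. One has to (i) make the local-density analysis truly effective at every bad prime with all implied constants independent of $Q$, and (ii) obtain a bound on $\|F\|_{\mathrm{Pet}}$ polynomial in $D$ with a small enough exponent that, combined with the Hecke eigenvalue bound above, the cuspidal contribution is dominated by the Eisenstein shape exactly when $D\ll n^{(k-3)/(2(k-2))}$. Threading the Atkin--Lehner newform decomposition at level $N\asymp D$, controlling oldform multiplicities, and squeezing all available Ramanujan-type savings is what should pin the admissible threshold for $D$ to the specific exponent $(k-3)/(2(k-2))$ in the hypothesis; once this balance is achieved, the Eisenstein and cuspidal bounds combine to give the theorem.
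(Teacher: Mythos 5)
Your overall decomposition $r(Q,n)=\rho(n,Q)+\tau(n,Q)$ and the treatment of the Eisenstein part via local densities match the paper's Lemma~\ref{1}. However, both of your remaining ingredients diverge from the paper's proof in ways that matter, and one of them hides a genuine gap.

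\textbf{The cuspidal term.} You propose to bound the \emph{individual} Petersson-normalized Fourier coefficients $\rho_{f_j}(n)$ by Deligne for even $k$ and by Iwaniec/Conrey--Iwaniec through the Shimura lift for odd $k$, and then to sum over the basis. The paper does something structurally different: it does not appeal to Ramanujan-type pointwise bounds at all. Instead, after the Cauchy--Schwarz step $|\tau(n,Q)|^2\le \|F\|^2\sum_j|\rho_{f_j}(n)|^2$, it bounds the entire \emph{second moment} $\sum_j|\rho_{f_j}(n)|^2$ at once with the Petersson trace formula, using Weil's bound on Kloosterman sums for $k$ even and Sali\'e sums for $k$ odd (the quantitative version in \cite[Corollary 14.24]{Iwan}). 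This is more robust in two respects. First, it is uniform across integral and half-integral weight, whereas your route splits into two cases and, for odd $k$, half-integral weight cusp forms do \emph{not} satisfy Ramanujan: the best known pointwise saving via Sali\'e sums or Waldspurger/Shimura subconvexity is strictly weaker, is usually stated at squarefree $n$, and whether it suffices to reach the threshold $D\ll n^{(k-3)/(2(k-2))}$ is not something your sketch makes credible. Second, your pointwise route quietly requires controlling the ratio between arithmetically normalized Hecke eigenvalues and Petersson-normalized coefficients, i.e.\ a Hoffstein--Lockhart lower bound on $L(1,\mathrm{sym}^2 f)$ and a careful oldform/newform analysis at level $N$; none of this is in your sketch, and it is exactly where the $N$- and $D$-dependence could be lost. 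The Petersson trace formula sidesteps all of it.

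\textbf{The Petersson norm of $F$.} You propose $\|F\|\le\|\Theta\|$ and then Rankin--Selberg unfolding plus the Siegel mass formula. But $\Theta$ is not cuspidal, so $\|\Theta\|_{\mathrm{Pet}}$ is not finite without regularization; this inequality cannot be taken at face value. The paper instead uses Blomer's Lemma~\ref{Valup} (proved in the appendix via reduction theory and the Lipschitz principle, by bounding $\sum_n r(\hat Q,n)^2 e^{-4\pi ny/N}$ for the adjoint form $\hat Q$ of level $N$ and determinant $N^kD^{-1}$), which yields the explicit bound $\|F\|^2\ll N^\epsilon\bigl(N^{k-2}+N^{(k-3)/2}D^{1/2}+D^{1-1/k}\bigr)\ll ND^{k-3+\epsilon}$. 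Your sketch does not produce any explicit exponent in $D$, and the precise shape of this bound is exactly what is needed, together with the trace-formula bound $\sum_j|\rho_{f_j}(n)|^2\ll (n^{(k-1)/2}/N)\gcd(n,D)^{1/2}n^\epsilon$, to land on the threshold $D\ll n^{(k-3)/(2(k-2))}$.

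In short: same framing, same Cauchy--Schwarz skeleton, but the two load-bearing estimates --- the second-moment bound on Fourier coefficients and the Petersson norm of the cuspidal projection --- are obtained by different and, in the odd-weight case, not obviously adequate means. Before this can count as a proof, you would need to (i) replace the pointwise Deligne/Iwaniec step by a uniform second-moment estimate (the Petersson trace formula is the natural tool), and (ii) give a genuine, regularization-free bound for $\|F\|^2$ with an explicit power of $D$, e.g.\ via reduction theory as in the appendix.
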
  
\begin{rem}\label{remarks}
Corollary (\ref{small}) is a consequence of Theorem (\ref{main}) with the discriminant bound $D\ll n^{1/4}$ and no conditions on the height of the quadratic forms.
 In fact the quadratic forms that we deal with are coming form the lattices given by the hyperplanes orthogonal to integral vectors of square norm $D$. So, the height of the quadratic forms might be as big as $D$. For quadratic forms in 5 or more variables in Theorem (\ref{main}), we do not need to appeal to Blomer's result stated in the appendix. 
\end{rem}

%

%
%
%
%
%
%
%
%
%
%

%

\subsection{Outline of the paper}

We give a brief outline of this paper. In section (\ref{sec}), we show that  corollary (\ref{mainn}) is a consequence of our main Theorem (\ref{main}).   Next, we give a proof of our main Theorem (\ref{main}). In the proof of Theorem (\ref{main}), we use an improved version of a lemma in Blomer's papers \cite[Lemma 4.2] {Val} or \cite[Lemma 3]{VB}. Professor Blomer provided us  a proof for this improved version of his previous lemma.We include his proof in our appendix.  We are responsible for any gap or typo in the appendix. In Lemma~(\ref{1}), we use the Siegel product formula (The main term of the Hardy-Littlewood formula) to give an upper bound on $\rho(n,Q)$. In Lemma~(\ref{Cusp}), we invoke the upper bound of Blomer \cite[Lemma~4.2]{Val}  on $\|F(z,Q)\|^2$ and then we apply the Petersson trace formula to  give an upper bound on $\tau(n,Q)$. The theorem is a consequence of lemma~(\ref{1}) and lemma~(\ref{Cusp}). 
  
%
%

\subsection{Acknowledgments}  We are grateful to Professor Valentin Blomer for his comments and letter to us.  In the letter,  he proves lemma~(\ref{l2}) which improves his earlier lemma \cite[Lemma~4.2]{Val}. This lemma is crucial  in our work.   We would also like to thank Professor Zeev Rudnick for suggesting  this project to us and his comments on the earlier version of this paper. Finally, I would like to thank Masoud Zargar for several comments and remarks on the earlier versions of this work.

%
%
%
%
%
%

%
%
%
%
%
\section{Proof of corollary (\ref{mainn}) }\label{sec}

\begin{proof}
\noindent We begin by proving  the first part of Corollary (\ref{mainn}) when the size of the caps is large, i.e.

$$n^{\frac{1}{2(d-1)}+o(1)}\ll Y \ll n^{1/2}.$$
This part is elementary; we use a covering argument in combination with a pigeonhole argument.
We assume that $n^{\frac{1}{2(d-1)}+o(1)}\ll Y$. Call an integral point $p\in E(n) $ \emph{bad} if 
$$\mu(p;n,Y)\leq \log(n).$$
We denote the number of bad points by $B$. Assume to the contrary that $B\geq \frac{1}{2}|E(n)|$. Hence, using $|E(n)|\approx n^{\frac{d-2}{2}}$, $B \gg n^{\frac{d-2}{2}}.$
Consider  balls of radius $\frac{Y}{2}$ centered at each bad point. Each point of the sphere is covered by at most $\log(n)$ of these balls; otherwise, there are more than $\log(n)$ bad points with distance at most $Y$, contradicting the definition of a bad point. Therefore, we have the following inequality from a covering argument

$$B(Y/2)^{d-1} \ll n^{\frac{d-1}{2}} \log(n). $$
Hence,
$$Y^{d-1}\ll n^{1/2}\log(n).$$
This is a contradiction to $Y\gg n^{\frac{1}{2(d-1)}+o(1)}.$  Therefore, $B<\frac{1}{2}|E(n)|$. Consequently,
$$\mathbb{P}[\left<\mu(\bullet;n,Y)  \right> > \log(n)]>1/2, $$
Therefore,
 $$ \left<\mu(\bullet;n,Y)  \right> \to \infty \text{ as }n\to\infty.$$ 
\noindent This concludes the proof of the first part.
\\

 \begin{lem}\label{ll}
 
The second part of the Corollary (\ref{mainn}) is a consequence of the inequality
$$A_{d}(n,t) \ll n^{\frac{d-3}{2}+\varepsilon} \gcd(n,t)^{1/2} (n-t)^{\frac{d-3}{2}},$$
where $n- n^{\frac{1}{(d-1)}-o(1)}<t<n$ and $5\leq d$. 
\end{lem}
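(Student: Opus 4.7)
The plan is to reduce the average $\langle \mu(\bullet;n,Y)\rangle$ to a sum of $A_d(n,t)$ over a short range of $t$ near $n$, apply the hypothesized bound, and then estimate the resulting divisor-type sum.

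First I would rewrite $\mu$ in terms of inner products. By definition, $|E(n)|\langle \mu(\bullet;n,Y)\rangle$ counts ordered pairs $(x,y)\in E(n)\times E(n)$ with $0<|x-y|\leq Y$. Using the polarization identity $|x-y|^2=2n-2\langle x,y\rangle$, the condition $0<|x-y|\leq Y$ becomes $\langle x,y\rangle=t$ for some integer $t$ with $n-Y^2/2\leq t\leq n-1$. Since $A_d(n,t)$ counts exactly the pairs $(x,y)\in E(n)\times E(n)$ with $|x|^2=|y|^2=n$ and $\langle x,y\rangle=t$ (matching coefficients in $\sum_i(a_ix+b_iy)^2$), this yields
\[
|E(n)|\,\langle\mu(\bullet;n,Y)\rangle \;=\; \sum_{\lceil n-Y^2/2\rceil\leq t\leq n-1} A_d(n,t).
\]
Note that the assumption $Y\ll n^{1/(2(d-1))-o(1)}$ gives $n-t\leq Y^2/2\ll n^{1/(d-1)-o(1)}$, so the hypothesized bound on $A_d(n,t)$ applies throughout the sum.

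Next I would substitute this bound with $s:=n-t$ to get
\[
|E(n)|\,\langle\mu(\bullet;n,Y)\rangle \;\ll\; n^{\frac{d-3}{2}+\varepsilon}\sum_{1\leq s\leq Y^2/2}\gcd(n,s)^{1/2}\,s^{\frac{d-3}{2}}.
\]
To handle the inner sum, I would use the trivial bound $s^{(d-3)/2}\leq (Y^2/2)^{(d-3)/2}\ll Y^{d-3}$ together with the standard divisor-type estimate
\[
\sum_{s\leq N}\gcd(n,s)^{1/2} \;=\; \sum_{\substack{e\mid n\\ e\leq N}} e^{1/2}\bigl|\{s\leq N:\gcd(n,s)=e\}\bigr| \;\leq\; N\sum_{e\mid n}e^{-1/2} \;\ll\; N\,\tau(n)\;\ll\; N\,n^{\varepsilon},
\]
applied with $N=\lfloor Y^2/2\rfloor$. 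Combining these gives $\sum_s\gcd(n,s)^{1/2}s^{(d-3)/2}\ll Y^{d-1}n^{\varepsilon}$.

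Finally, since $|E(n)|\asymp n^{(d-2)/2}$ for $d\geq 5$, I would conclude
\[
\langle\mu(\bullet;n,Y)\rangle \;\ll\; \frac{n^{\frac{d-3}{2}}\,Y^{d-1}}{n^{\frac{d-2}{2}}}\,n^{O(\varepsilon)} \;=\; \frac{Y^{d-1}}{n^{1/2}}\,n^{O(\varepsilon)},
\]
and the hypothesis $Y\ll n^{1/(2(d-1))-o(1)}$ gives $Y^{d-1}\ll n^{1/2-o(1)}$, so choosing $\varepsilon$ small enough shows the right-hand side tends to $0$. The only nontrivial step is the identification of the sum with $\sum_t A_d(n,t)$ via polarization; the remaining estimates are routine, and no serious obstacle is expected.
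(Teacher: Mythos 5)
Your argument is correct and takes essentially the same route as the paper's: express $|E(n)|\langle\mu(\bullet;n,Y)\rangle$ as $\sum_t A_d(n,t)$ via polarization, insert the hypothesized bound, and estimate the resulting $\gcd$-weighted sum over $s=n-t$ (using $\gcd(n,t)=\gcd(n,s)$), then divide by $|E(n)|\asymp n^{(d-2)/2}$. The only difference is that you spell out the elementary divisor estimate $\sum_{s\le Y^2/2}\gcd(n,s)^{1/2}s^{(d-3)/2}\ll Y^{d-1}n^{\varepsilon}$ that the paper states without derivation.
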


\begin{proof}
For the second part of Corollary (\ref{mainn}), when the size of caps is small, i.e.  
$$Y \ll n^{\frac{1}{2(d-1)}-o(1)} ,$$
\noindent we follow the technique in the argument for \cite[lemma 11 ]{Rudnick} to prove lemma~(\ref{ll}). Let $A_{d}(n,t)$ be the number of ordered pairs of distinct integral lattice points $(p,q)\in\mathbb{Z}^d\times\mathbb{Z}^d$ such that $|p|^2=|q|^2=n$ and $|p-q|^2=2(n-t)$. Note that a change of summation argument gives us the equality 
$$\left<\mu(\bullet;n,Y)\right>=\frac{1}{|E(n)|}\sum_{n-\frac{Y^2}{2}< t<n}A_d(n,t).$$ 
Since $Y \ll n^{\frac{1}{2(d-1)}-o(1)} $ and $n-\frac{Y^2}{2}< t<n$, so  $n- n^{\frac{1}{(d-1)}-o(1)}<t<n$ and we can apply the inequality
$$A_{d}(n,t) \ll n^{\frac{d-3}{2}+\varepsilon} \gcd(n,t)^{1/2} (n-t)^{\frac{d-3}{2}},$$
and we obtain 
\begin{eqnarray*}
\left<\mu(\bullet;n,Y)\right>&\ll& \frac{1}{|E(n)|}\sum_{n-\frac{Y^2}{2}< t<n}n^{\frac{d-3}{2}+\varepsilon} \gcd(n,t)^{1/2} (n-t)^{\frac{d-3}{2}}\\
&\ll& \frac{n^{\frac{d-3}{2}+\varepsilon}}{|E(n)|}\sum_{n-\frac{Y^2}{2}< t<n}\gcd(n,t)^{1/2} (n-t)^{\frac{d-3}{2}}\\
&\ll& \frac{n^{\frac{d-3}{2}+\varepsilon}}{|E(n)|}\sum_{1< s<\frac{Y^2}{2}}\gcd(n,s)^{1/2} s^{\frac{d-3}{2}}\ (\text{where } s:=n-t)\\
&\ll& \frac{n^{\frac{d-3}{2}+\varepsilon}Y^{d-1}n^{\varepsilon}}{|E(n)|}.
\end{eqnarray*}
\noindent Using $|E(n)|\approx n^{\frac{d-2}{2}}$ and $Y\ll n^{\frac{1}{2(d-1)}-o(1)}$, we get
$$\left<\mu(\bullet;n,Y)\right>\ll n^{\varepsilon-o(1)}\to 0\text{ as }n\to\infty.$$
\end{proof}
\noindent Lemma (\ref{lem}) is devoted to showing that the following inequality in lemma~(\ref{ll}) is true:
$$A_{d}(n,t) \ll n^{\frac{d-3}{2}+\varepsilon} \gcd(n,t)^{1/2} (n-t)^{\frac{d-3}{2}},$$
for $n- n^{\frac{1}{(d-1)}-o(1)}<t<n$ and $ 5 \leq d$.\\
\\
\begin{lem}\label{lem}
Theorem (\ref{main}) implies that 
$$A_{d}(n,t) \ll n^{\frac{d-3}{2}+\varepsilon} \gcd(n,t)^{1/2} (n-t)^{\frac{d-3}{2}},$$
where $n- n^{\frac{1}{(d-1)}-o(1)}<t<n$ and $ 5 \leq d$.\\
\end{lem}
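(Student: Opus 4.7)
The plan is to reduce $A_d(n,t)$ to a sum of representation numbers for $(d-1)$-variable quadratic forms and invoke Theorem \ref{main}. Parameterize each pair $(p,q)$ counted by $A_d(n,t)$ by the difference $v=p-q\in\mathbb{Z}^d$, which satisfies $|v|^2=2(n-t)$ and forces $\langle p,v\rangle=n-t$, giving
$$
A_d(n,t)=\sum_{\substack{v\in\mathbb{Z}^d\\|v|^2=2(n-t)}}N(v),\qquad N(v)=\#\{p\in\mathbb{Z}^d:|p|^2=n,\ \langle p,v\rangle=n-t\}.
$$

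For $v=gv'$ with $v'$ primitive, let $L_{v'}=(v')^\perp\cap\mathbb{Z}^d$; this has rank $d-1$ and Gram-determinant $|v'|^2=2(n-t)/g^2$, by the standard formula for the orthogonal complement of a primitive integral vector. The substitution $s=2p-gv'$ is a bijection from the $p$'s counted by $N(v)$ onto vectors $s\in L_{v'}$ with $|s|^2=2(n+t)$ satisfying the parity restriction $s\equiv gv'\pmod{2\mathbb{Z}^d}$. Discarding the parity gives
$$
N(v)\leq r(Q_{v'},2(n+t)),
$$
where $Q_{v'}$ is the positive definite integral form of discriminant $D\asymp(n-t)/g^2$ on $L_{v'}$. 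Theorem \ref{main} then applies with $k=d-1$ and $N=2(n+t)\asymp n$; its discriminant hypothesis $D\ll N^{(k-3)/(2(k-2))}=n^{(d-4)/(2(d-3))}$ reduces to $n-t<n^{1/(d-1)-o(1)}$, which is the standing assumption (the case $d=5$, where $1/(d-1)=(d-4)/(2(d-3))=1/4$, being rescued by the $o(1)$).

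Theorem \ref{main}, together with the elementary bound $\gcd(n-t,n+t)\ll\gcd(n,t)$, yields
$$
N(v)\ll\frac{g\,n^{(d-3)/2}}{\sqrt{n-t}}\gcd(n,t)^{1/2}n^{\varepsilon}.
$$
For each $g$, the number of primitive $v'$ with $|v'|^2=2(n-t)/g^2$ is $\ll((n-t)/g^2)^{(d-2)/2}$ (the standard bound on $r_d$, itself a case of Theorem \ref{main} with $D=1$). Multiplying and summing over $g\geq1$ produces the tail $\sum_{g\geq1}g^{3-d}$, which converges precisely when $d\geq 5$ and yields the claimed
$$
A_d(n,t)\ll n^{(d-3)/2+\varepsilon}(n-t)^{(d-3)/2}\gcd(n,t)^{1/2}.
$$

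The main obstacle is the invocation of Theorem \ref{main} on $Q_{v'}$: one must confirm that the induced form is primitive (or, if not, reduce to its primitive part and verify that the gcd factor retains its shape after this reduction), and one must verify that the discriminant hypothesis is satisfied at the boundary $d=5$, where $n-t<n^{1/4-o(1)}$ just barely meets $D\ll n^{1/4}$. A secondary technicality is the passage from the shifted (parity-constrained) count to the unshifted one, which is handled harmlessly by upper-bounding the former by the latter.
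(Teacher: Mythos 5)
Your proposal follows the same approach as the paper: parameterize by $v = p - q$ with $|v|^2 = 2(n-t)$, reduce to counting $p + q$ (your $s = 2p - g v'$ is exactly $p+q$) in the rank-$(d-1)$ lattice $L_{v'} = (v')^\perp \cap \mathbb{Z}^d$, apply Theorem \ref{main} to its Gram form of discriminant $\asymp (n-t)/g^2$, and sum over the imprimitivity parameter $g$ (the paper's $l$), exactly as in the paper. The primitivity concern you flag is genuine but equally unaddressed in the paper, which simply asserts the Gram form is primitive; the parity issue you raise is correctly dismissed because the argument needs only an upper bound, which is also the paper's implicit stance.
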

\begin{proof}
Recall that $A_d(n,t)$ is the number of ordered pairs of distinct integral lattice points $(p,q)\in\mathbb{Z}^d\times\mathbb{Z}^d$ such that $|p|^2=|q|^2=n$ and $|p-q|^2=2(n-t)$. Let $v:=p-q$. Note that $v$ is an integral vector of length $\sqrt{2(n-t)}$ and so, up to a constant, we have at most
\begin{equation}\label{choice}(n-t)^{\frac{d-2}{2}},\end{equation}
choices for $v$, because $d\geq 5$ . Note that
$$\left<p+q,v \right>=0,$$ 
and
$$|p+q|^2=2(n+t).$$
Let $L$ be the $(d-1)$-dimensional lattice that is given by the intersection of $\mathbb{Z}^d$ and the hyperplane orthogonal to $v$. Therefore, $p+q$ lies inside the lattice $L$ and the sphere of radius $\sqrt{2(n+t)}$.  The fundamental domain of the lattice $L$ has volume $|v^{\prime}|$ where $v^{\prime}$ is the primitive integral vector in the direction of $v$. We take an integral basis for the lattice $L$ and denote it by $$\{e_1,\dots, e_{d-1}   \}.$$
We define the symmetric matrix $A$ by 
$$A:=\big[ \left< e_i,e_j \right>_{1 \leq i,j \leq d-1}  \big].$$
We define the quadratic form $Q(X)$ by
$$Q(X):=X^{T}AX.$$
Let $k:=(d-1)$. This quadratic from is a primitive quadratic form in $k \geq 4$ variables with discriminant $D:=|v^{\prime}|^2$. We have the following upper bound on the discriminant of $Q$  

$$D=|v^{\prime}|^2 \leq n^{\frac{1}{k}-o(1)}.$$
Since $ \frac{1}{k} \leq \frac{k-3}{2(k-2)}$ when $k \geq 4$ then
$$D \ll n^{\frac{k-3}{2(k-2)}}.$$ 
We can apply  theorem \ref{main}, and as a consequence we have the following bound on the number of lattice points of $L$ with norm $2(n+t)$
$$c_{\epsilon}\frac{(2(n+t))^{\frac{d-3}{2}}}{|v^{\prime}|}\gcd(2(n-t),2(n+t))^{1/2}(2(n+t))^{\epsilon}\ll\frac{n^{\frac{d-3}{2}}}{|v^{\prime}|}\gcd(n,t)^{1/2}n^{\epsilon}.$$
Recall that $|v|^2=|p-q|^2=2(n-t)$ and the number of integral points $v$ where $\frac{|v|}{|v^{\prime}|}=l$ is less than $\big(\frac{{n-t}}{l^2}\big)^{\frac{d-2}{2}+\epsilon}$ up to a constant. We give an upper bound on $A_d(n,t)$ by first choosing $v=p-q$ and then $p+q$
$$A_{d}(n,t) \ll \sum_{l^2|2(n-t)}(\frac{n-t}{l^2})^{\frac{d-2}{2}}\frac{n^{\frac{d-3}{2}}}{\frac{\sqrt{n-t}}{l}}\gcd(n,t)^{1/2}n^{\epsilon}.$$
Therefore,
$$A_{d}(n,t) \ll n^{\frac{d-3}{2}+\epsilon} \gcd(n,t)^{1/2} (n-t)^{\frac{d-3}{2}}.$$

\end{proof}

\end{proof}

\section{Proof of the Main theorem}
We give a brief plan of the proof of Theorem (\ref{main}) in what follows. Recall the notations that were introduced in section (\ref{Main theorem}). 
%
%
%
  In Lemma~\ref{1}, we use the Siegel product formula (the main term of the Hardy-Littlewood formula) to give an upper bound on $\rho(n,Q)$. In Lemma~\ref{Cusp}, we invoke the upper bound of Blomer on $\|F(z,Q)\|^2$ in Lemma (\ref{Valup}) that is proved in our appendix. Finally we apply the Petersson trace formula together with a Cauchy inequality to  give an upper bound on the $n$-th Fourier coefficient  $\tau(n,Q)$ of $F(z,Q)$. Theorem (\ref{main}) is a consequence of lemma~(\ref{1}) and lemma~(\ref{Cusp}). 
\\
%

The following lemma proved by Blomer  \cite[page 6]{VB} for $k=3$.  We follow his strategy and give a proof for every $k \geq 4$.

\begin{lem}\label{1}\label{Val} 
We have the following upper bound on the $n$-th Fourier coefficient of  the Eisenstein series of the form $Q$  
$$\rho(n,Q)\leq c_{\epsilon}\frac{n^{\frac{k-2}{2}}}{\sqrt{D}}\gcd(N,n)^{1/2}(nN)^{\epsilon},$$
where $N$ is the level of the quadratic from $Q$.
\\
\end{lem}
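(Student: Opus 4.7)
The plan is to apply the Siegel product formula (the Minkowski--Siegel mass formula), which expresses $\rho(n,Q)$ as an explicit Archimedean factor times an Euler product of local densities:
$$\rho(n,Q) = \frac{(2\pi)^{k/2}}{\Gamma(k/2)} \cdot \frac{n^{(k-2)/2}}{\sqrt{D}} \prod_p \alpha_p(n,Q),$$
where $\alpha_p(n,Q) = \lim_{a\to\infty} p^{-a(k-1)} \#\{X \in (\mathbb{Z}/p^a\mathbb{Z})^k : Q(X) \equiv n \pmod{p^a}\}$. The leading factor already matches the shape of the claimed bound, so the task reduces to showing
$$\prod_p \alpha_p(n,Q) \ll_\epsilon \gcd(N,n)^{1/2} (nN)^\epsilon.$$

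First I would split the Euler product according to good primes $p \nmid 2N$ and bad primes $p \mid 2N$. For the good primes, the local density admits a closed-form expression (depending on the parity of $k$ and on $v_p(n)$) of the form $1 - \chi(p) p^{-k/2} \cdot (\text{simple factor})$, where $\chi$ is the quadratic character associated to the discriminant of $Q$. Collecting these factors produces, up to an absolutely bounded constant, an $L$-value at $s=1$ (a Dirichlet $L$-value when $k$ is even, and a ratio of $\zeta$-values otherwise) which, combined with a divisor-bound argument to absorb the ramified local factors depending on $v_p(n)$, is $\ll (nN)^\epsilon$ by standard estimates.

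The serious work lies at the bad primes, and here I would follow Blomer's strategy from \cite{VB} for $k=3$ and extend the local computation to arbitrary $k \geq 4$. For each $p \mid 2N$ write $Q$ in its Jordan normal form over $\mathbb{Z}_p$, $Q \sim \bigoplus_i p^{a_i} Q_i$ with unimodular blocks $Q_i$ of ranks $r_i$ summing to $k$. One then computes $\alpha_p(n,Q)$ for $a$ taken past the stabilization threshold by lifting solutions from low modulus to high modulus and evaluating the resulting Gauss sums. The goal is to show that, prime by prime,
$$\alpha_p(n,Q) \ll_\epsilon \gcd(p^{v_p(N)}, n)^{1/2} \cdot p^{\epsilon v_p(N)}$$
(after accounting for the contribution already extracted into $D^{-1/2}$). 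Multiplying across $p \mid 2N$ gives the factor $\gcd(N,n)^{1/2} N^\epsilon$ in the statement.

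The main obstacle is this last uniform bound at the bad primes. For $k=3$ Blomer handles it by a direct case analysis on the Jordan type, but for general $k \geq 4$ the number and ranks of Jordan blocks proliferate, so the case analysis is more delicate and one must verify in every configuration that no stray power of $p$ beyond $\gcd(p^{v_p(N)},n)^{1/2}$ appears. The $\gcd^{1/2}$ comes precisely from the fact that when $p^j \mid n$ the lifting counts grow, but only at half the rate at which $v_p(n)$ increases, thanks to the quadratic nature of $Q$; verifying this uniformly across Jordan types, and checking that the interaction between $v_p(n)$ and the $a_i$ does not produce a worse exponent, is the step where care is needed.
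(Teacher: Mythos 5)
Your strategy is the paper's: Siegel product formula, archimedean factor $\asymp D^{-1/2}$, and a good/bad prime split extending Blomer's $k=3$ local computation to $k\geq 4$. But you stop at the heart of the argument: the bad-prime estimate $\sigma_p \ll \gcd(p^{v_p(N)},n)^{1/2}\,p^{\epsilon v_p(N)}$ \emph{is} the content of the lemma, and you flag it as ``the step where care is needed'' rather than carrying it out. The worry you raise about Jordan types ``proliferating'' for $k\geq 4$ does not in fact materialize, and seeing why is the point. For odd $p$ one diagonalizes $Q$ over $\mathbb{Z}_p$ as $\sum_i a_i p^{\alpha_i}x_i^2$ (no block shapes, just exponents, with $\alpha_1=0$ by primitivity), expands $\sigma_p=\sum_t S(p^t)$, and evaluates the inner exponential sums as Gauss or Ramanujan sums to get
$$|S(p^t)|\leq p^{t/2}\,\frac{\prod_{i\geq 2}p^{\min((\alpha_i+t)/2,\,t)}}{p^{tk}}\,p^{\min(t,\,(t+\beta)/2)},\qquad \beta:=v_p(n).$$
Since only an upper bound is needed, one may crudely replace $\prod_{i\geq 2}p^{\min((\alpha_i+t)/2,t)}$ by $p^{(k-1)\min((\alpha_k+t)/2,\,t)}$: only the largest exponent $\alpha_k$ and primitivity matter, and the Jordan data in between is discarded, so the case analysis does not grow with $k$. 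Summing over $t$ gives $\sigma_p\ll p^{\min(\alpha_k,\beta)/2}\leq p^{\min(v_p(N),v_p(n))/2}$, which multiplies up to $\gcd(N,n)^{1/2}(nN)^\epsilon$. At $p=2$ the three $2$-adic block types $x^2$, $x_1x_2$, $x_1^2+x_1x_2+x_2^2$ all produce Gauss sums of size $2^{\min(t+\alpha,2t)}$, so the identical bound $\sigma_2\ll 2^{\min(\alpha_k,\beta)/2}$ holds with no new difficulty. One further remark: your good-prime argument via an $L$-value at $s=1$ plus a divisor bound is what $k=3$ forces (there the unramified product is only conditionally convergent); for $k\geq 4$ the paper simply notes $\prod_{p\nmid 2nN}\sigma_p\leq\prod_p(1+p^{-(k-1)/2})=O(1)$, absolutely convergent since $(k-1)/2\geq 3/2$, so larger $k$ makes this step easier, not harder.
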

%
%
%

\begin{proof}  $\rho(n,Q)$, the $n$-th Fourier coefficient of the Eisenstein series,   coincides with the main term of the Hardy-Littlewood formula. The main term of the Hardy-Littlewood formula, is given by the product of local densities 
$$\rho(n,Q):=n^{\frac{k-2}{2}}\sigma_{\infty}\prod_{p}\sigma_{p} .
$$
We have the following formula for the local densities: $$\sigma_p=\sum_{t=0}^{\infty} S(p^t),$$
where
$$S(p^t):=\frac{1}{p^{tk}} \sum_{a}^* \sum_{b}e\Big(\frac{a(Q(b)-n)}{p^t}\Big).$$
$\sum_{a}^*$ means that  $a$ varies mod $p^{t}$ where $\gcd(a,p)=1$,  $b$ is a vector that varies mod $p^t\mathbb{Z^d}$ and S(1)=1.

We give an upper bound for each place. First, we start with $\infty$.
The density at $\infty$ is given by 
$$\sigma_{\infty}=\lim_{\epsilon \to 0} \frac{\int_{1<Q(X)<1+\epsilon}1   dx_1     \dots  dx_k}{\epsilon}.
$$
We diagonalize $Q(X)$ in an orthonormal coordinates $Y:=(y_1, \dots, y_k)$ such that
$$Q(Y):=n_1 y_1^2+ \dots +n_k y_k^2.$$ 
where $n_1, \dots, n_k$ are the eigenvalues of the symmetric matrix $A$.
We change the variables to $(y_1, \dots, y_k)$ to get
$$\sigma_{\infty}=\frac{1}{\sqrt{det{A}}}\vol({S^{k-1}}).$$
Next, we give an upper bound on  the local densities $\sigma_p$ where $p\neq 2$. Since $p$ is an odd prime number, we can diagonalize our quadratic form $Q(X)$ over the local ring $\mathbb{Z}_{p}$. Without loss of generality we assume that
$$Q(x_1,\dots,x_k)=\sum_{i=1}^{k}a_ip^{\alpha_i}x_i^2,$$
where $\gcd(a_i,p)=1$ and $a_i\in \mathbb{Z}_p$.
We substitute the diagonal expansion of $Q(x_1,\dots,x_k)$ to compute $S(p^t)$
\begin{equation*}
\begin{split}
S(p^t)&:=\frac{1}{p^{tk}}\sum_{a}^*\sum_{b\in{(\frac{\mathbb{Z}}{p^t\mathbb{Z}}})^k}e\Big(\frac{a(Q(b)-n)}{p^t}\Big)
\\
&=\frac{1}{p^{tk}}\sum_{a}^*\sum_{b\in{(\frac{\mathbb{Z}}{p^t\mathbb{Z}}})^k}e\Big(\frac{a(\sum_{i=1}^{k} a_{i}p^{\alpha_i}b_i^2-n)}{p^t}\Big)
\\
&=\frac{1}{p^{tk}}\sum_{a}^*e\Big(\frac{-an}{p^t}\Big)\prod_{i=1}^{k}\sum_{b \text{ mod } p^t}e\Big(\frac{aa_{i}p^{\alpha_i}b^2}{p^t}\Big).
\end{split}
\end{equation*}
We note that the last summation is a Gauss sum. Let $G(h,m):=\sum_{x \text{ mod } m} e(\frac{hx^2}{m})$ be the Gauss sum, and let $\epsilon_{m}=1$ if $m\equiv1 (\text{ mod } 4) $ and $\epsilon_{m}=i$ if $m\equiv 3 (\text{ mod } 4) $. Then if $\gcd(h,m)=1$ we have

\begin{equation*}
G(h,m):=\begin{cases} 
\epsilon_m \Big(\frac{h}{m} \Big) m^{1/2} & \text{ if } m \text{ is odd },\\
(1+\chi_{-4}(h))m^{1/2}  &\text{ if } m=4^{\alpha},\\
(\chi_8(h)+i\chi_{-8}(h)) m^{1/2}  &\text{ if } m=2.4^{\alpha}, \alpha\geq 1.
\end{cases}
\end{equation*}
%
where $\Big(\frac{h}{m} \Big)$ is the Jacobi symbol. We have
$$S(p^t)=\frac{1}{p^{tk}}\sum_{a}^*e\Big(\frac{-an}{p^t}\Big)\prod_{i=1}^{k}p^{\min(\alpha_i,t)}G(aa_i,p^{t-\alpha_i}).$$
We define $G(\bullet,p^{t-\alpha_i}):=1$ when $t <\alpha_i $. 
We substitute the values of $G$ and obtain,

\begin{equation*}
\begin{split}
|S(p^t)|=\frac{\prod_{i=1}^k p^{\min(\frac{\alpha_i+t}{2},t)}}{p^{tk}}\big|\sum_{a}^*e\Big(\frac{-an}{p^t}\Big)\Big(\frac{a}{p} \Big)^{k^{\prime}}\big|,
\end{split}
\end{equation*}
where $k^{\prime}$ is the number of integers $i$ such that $1 \leq i \leq k$ and $t-\alpha_i$ is a positive odd integer. Assume that $n=p^{\beta}n^{\prime}$, where $\gcd(n^{\prime},p)=1$. If $k^{\prime}$ is an odd number then the inner sum is a Gaussian sum, and we obtain
\begin{equation*}
 \big|\sum_{a \text{ mod } p^t}^*e\Big(\frac{-ap^{\beta}n^{\prime}}{p^t}\Big)\Big(\frac{a}{p} \Big)\big|=
 \begin{cases}
 p^{t-\frac{1}{2}}   &\text{  if  } \beta=t-1,\\
 0    &\text{  otherwise  }.
 \end{cases} 
\end{equation*}
Hence, if $k^{\prime}$ is odd we deduce that
\begin{equation}\label{odd}
|S(p^t)|= \begin{cases}\frac{\prod_{i=1}^k p^{\min(\frac{\alpha_i+t}{2},t)}}{p^{tk}}p^{t-\frac{1}{2}} &\text{ if } \beta=t-1  ,\\
0 &\text{ otherwise. }
\end{cases}
%
\end{equation}
On the other hand, if $k^{\prime}$ is even then the inner sum is a Ramanujan sum $c_{p^t}(n)$,
$$c_{p^t}(n)=\sum_{a}^*e\Big(\frac{-an}{p^t}\Big)=\begin{cases} 0   &\text{ if } \beta< t-1, \\ 
-p^{t-1} &\text{  if  } \beta=t-1, \\
\phi(p^t) &\text{  if  } \beta\geq t.
    \end{cases}$$
Hence, if $k^{\prime}$ is even we deduce that
 
\begin{equation}\label{even}
|S(p^t)|=\begin{cases} 0   &\text{ if } \beta< t-1, \\ 
-\frac{\prod_{i=1}^k p^{\min(\frac{\alpha_i+t}{2},t)}}{p^{tk}}p^{t-1} &\text{  if  } \beta=t-1, \\
\phi(p^t)\frac{\prod_{i=1}^k p^{\min(\frac{\alpha_i+t}{2},t)}}{p^{tk}} &\text{  if  } \beta\geq t.
\end{cases}
\end{equation}

%
Without loss of generality suppose that $\alpha_1\leq \alpha_2 \leq \dots \leq \alpha_k$. Since $Q(X)$ is primitive we deduce that $\alpha_1=0$.
If $k^{\prime}$ is odd (\ref{odd}) and if $k^{\prime}$ is  even (\ref{even}), we deduce that
\begin{equation}\label{inq}
\begin{split}
|\sigma_p| &\leq \sum_{t=0}^{\beta+1} p^{t/2}\frac{\prod_{i=2}^k p^{\min(\frac{\alpha_i+t}{2},t)}}{p^{tk}}p^{\min(t,\frac{t+\beta}{2})}\\
&\leq \sum_{t=0}^{\beta+1}\frac{p^{t/2}p^{(k-1)\min(\frac{\alpha_k+t}{2},t)}p^{\min(t,\frac{t+\beta}{2})}}{p^{tk}}.
\end{split}
\end{equation}
From the above inequality and the condition that $k\geq 4$, we give an upper bound on the product of local densities $\prod_{p\nmid 2nN}\sigma_p$ as follows.
\begin{equation}
\begin{split}
\prod_{p\nmid 2nN}\sigma_p&\leq \prod_{p\nmid 2nN}(1+p^{-\frac{k-1}{2}})
\\
&\leq \sum_{n=1}^{\infty}\frac{1}{n^{3/2}}=O(1).
\end{split}
\end{equation}
If $p$ is an odd prime number from the inequality (\ref{inq}), we obtain
$$\sigma_p\ll p^{\frac{\min(\alpha_k,\beta)}{2}}.$$
Hence, for odd prime numbers where $p|nN$ we have
\begin{equation}\label{rrr}
\prod_{p|nN}\sigma_p\leq (nN)^{\epsilon}\gcd(n,N)^{1/2}.
\end{equation}
Finally, we assume that $p=2$. One can write any quadratic form $Q(X)$, after a change of variables over the 2-adic integers $\mathbb{Z}_2$, as a direct sum of scalar multiples of $q_0(x):=x^2$, $q_1(x_1,x_2):=x_1x_2$ and $q_2(x_1,x_2):=x_1^2+x_1x_2+x_2^2$.  The corresponding Gauss sums can be evaluated for odd $h$:

$$\sum_{b_1, b_2 \text{ mod } 2^t} e\big(\frac{2^{\alpha_1}x_1x_2h}{2^t}    \big)=2^{\min(t+\alpha_1,2t)},$$
and
$$\sum_{b_1, b_2 \text{ mod } 2^t} e\big(\frac{2^{\alpha_1}(x^2+x_1x_2+x^2)h}{2^t}    \big)=2^{\min(t+\alpha_1,2t)}.$$
We substitute the values of these Gauss sums and  obtain
\begin{equation}\label{inq22}
\begin{split}
\sigma_2(Q,n)\leq \sum_{t=0}^{\beta+1} 2^{t/2}\frac{\prod_{i=2}^k 2^{\min(\frac{\alpha_i+t}{2},t)}}{2^{tk}}2^{\min(t,\frac{t+\beta}{2})}\\
\leq \sum_{t=0}^{\beta+1} \frac{2^{t/2}2^{(k-1)\min(\frac{\alpha_k+t}{2},t)}2^{\min(t,\frac{t+\beta}{2})}}{2^{tk}}.
\end{split}
\end{equation}
From the inequality (\ref{inq22}), we obtain

$$\sigma_2(Q,n)\ll 2^{\frac{\min(\alpha_k,\beta)}{2}}.$$
This inequality together with inequality (\ref{rrr}) for odd primes $p$ implies

$$
\prod_{p|\gcd(D,N)}\sigma_p \leq  \gcd(N,D)^{1/2}.
$$
Therefore, we conclude the lemma.
\\
\end{proof}

 In Lemma~\ref{Cusp}, we invoke a result of Blomer, proved in Appendix 1, and then we apply the Petersson trace formula together with a Cauchy inequality to  give an upper bound on $\tau(n,Q)$. Theorem (\ref{main}) is a consequence of lemma~(\ref{1}) and lemma~(\ref{Cusp}). 

\begin{lem}\label{Cusp} Let $Q(X)$ be a quadratic form in $k\geq 4$ variables and discriminant $D<n^{\frac{k-3}{2(k-2)}}$. Then we have the following upper bound on the $n$-th Fourier coefficient of the cusp form part of the theta series associated to $Q$
$$|\tau(n,Q)|\ll_{d,\epsilon} D^{\frac{(k-3)}{2}}n^{(k-1)/4}\gcd(n,D)^{1/4}n^{\epsilon}.$$
In particular
$$|\tau(n,Q)|\ll c_{\epsilon}\frac{n^{\frac{k-2}{2}}}{\sqrt{D}}\gcd(D,n)^{1/4}n^{\epsilon}.$$
\end{lem}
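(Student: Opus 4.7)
The strategy is a spectral decomposition combined with Cauchy--Schwarz. Choose an orthonormal basis $\{f_j\}$ of the space of holomorphic cusp forms of weight $k/2$ and level $N$ (with the appropriate theta multiplier when $k$ is odd), so that $F(z,Q) = \sum_j c_j f_j$ and $\|F(z,Q)\|^2 = \sum_j |c_j|^2$. Writing $f_j(z) = \sum_m a_j(m)e(mz)$, one has $\tau(n,Q) = \sum_j c_j a_j(n)$, and Cauchy--Schwarz gives the clean upper bound
$$|\tau(n,Q)|^2 \;\leq\; \|F(z,Q)\|^2 \cdot \sum_j |a_j(n)|^2.$$
The first factor is supplied directly by Lemma~\ref{Valup}, the Blomer bound proved in the appendix; for the second I would apply the Petersson trace formula at $m = n$.

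For weight $k/2$ on $\Gamma_0(N)$ the Petersson formula expresses
$$\sum_j |a_j(n)|^2 \;=\; \frac{(4\pi n)^{k/2-1}}{\Gamma(k/2-1)}\Bigl\{1 + 2\pi i^{-k/2}\sum_{N\mid c} \frac{S(n,n;c)}{c}\,J_{k/2-1}\!\Big(\frac{4\pi n}{c}\Big)\Bigr\},$$
with Kloosterman sums replaced by Sali\'e sums (of the same square-root quality) when $k$ is odd. The diagonal term contributes $n^{k/2-1}$. For the off-diagonal tail the plan is to split the $c$-sum dyadically: in the small-argument range $c \gg n$ the bound $J_{k/2-1}(x)\ll x^{k/2-1}$, together with $k/2-1 \geq 1$ (from $k\geq 4$), makes the series absolutely convergent; in the range $c \lesssim n$ the estimate $J_{k/2-1}(x)\ll x^{-1/2}$ combined with the Weil bound $|S(n,n;c)|\ll \tau(c)c^{1/2}\gcd(n,c)^{1/2}$ should yield, altogether,
$$\sum_j |a_j(n)|^2 \;\ll_\epsilon\; n^{k/2-1}\,\gcd(n,N)^{1/2}\,(nN)^\epsilon.$$

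Multiplying this by Blomer's norm estimate from Lemma~\ref{Valup} (which, under the hypothesis $D \ll n^{(k-3)/(2(k-2))}$, is of order $D^{k-3}n^{1/2}$ up to $n^\epsilon$ and $\gcd$ factors), taking square roots, and absorbing $N \mid 2D$ into $D$, produces the first asserted bound $|\tau(n,Q)| \ll_\epsilon D^{(k-3)/2}n^{(k-1)/4}\gcd(n,D)^{1/4}n^\epsilon$. The ``in particular'' form is then purely algebraic: the inequality $D^{(k-3)/2}n^{(k-1)/4} \leq n^{(k-2)/2}/\sqrt{D}$ rearranges to $D^{(k-2)/2}\leq n^{(k-3)/4}$, which is exactly the standing hypothesis, so the weaker consequence follows with the same $\gcd$ factor.

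The main technical obstacle is the uniformity of the Petersson error term in the level $N$, which here is comparable to $D$ rather than a fixed bounded modulus, so every estimate must be tracked with explicit $N$-dependence. Particular care is needed in the transition range $c \asymp n$ where neither Bessel asymptotic dominates, and (for odd $k$) in verifying that the half-integral-weight multiplier only substitutes Sali\'e sums for Kloosterman sums without spoiling the square-root cancellation used above. The Blomer norm input is used as a black box from the appendix.
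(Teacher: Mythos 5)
Your overall strategy is identical to the paper's: spectral decomposition of $F(z,Q)$ in an orthonormal basis of cusp forms, Cauchy--Schwarz to split $|\tau(n,Q)|^2 \leq \|F\|^2 \cdot \sum_j |a_j(n)|^2$, Blomer's bound (Lemma~\ref{Valup}) for the first factor, and the Petersson trace formula for the second. So there is nothing genuinely different in the route. However, there is a concrete computational error in your treatment of the Petersson sum.

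You claim that the off-diagonal analysis ``should yield, altogether, $\sum_j |a_j(n)|^2 \ll_\epsilon n^{k/2-1}\gcd(n,N)^{1/2}(nN)^\epsilon$.'' This is not what the sketch you describe actually gives. In the range $c\lesssim n$, with $J_{k/2-1}(4\pi n/c)\ll (c/n)^{1/2}$ and the Weil bound $|S(n,n;c)|\ll c^{1/2+\epsilon}\gcd(n,c)^{1/2}$, the summand $\frac{|S(n,n;c)|}{c}|J_{k/2-1}(4\pi n/c)|$ is $\asymp n^{-1/2}c^\epsilon\gcd(n,c)^{1/2}$, and summing over the $\asymp n/N$ multiples of $N$ up to $n$ gives a contribution $\asymp \frac{n^{1/2}}{N}\gcd(n,N)^{1/2}n^\epsilon$, not $O(\gcd(n,N)^{1/2}n^\epsilon)$. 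Multiplying by the prefactor $(4\pi n)^{k/2-1}/\Gamma(k/2-1)$, the correct estimate (which is exactly what the paper extracts from \cite[Corollary 14.24]{Iwan}) is
\begin{equation*}
\sum_j |a_j(n)|^2 \;\ll_\epsilon\; \frac{n^{(k-1)/2}}{N}\,\gcd(n,N)^{1/2}\,n^\epsilon,
\end{equation*}
and the off-diagonal \emph{dominates} the diagonal term $n^{k/2-1}$ because $N \mid 2D$ with $D\ll n^{(k-3)/(2(k-2))}\leq n^{1/2}$ forces $N\ll n^{1/2}$. Your stated bound is therefore too small by a factor $n^{1/2}/N\geq 1$.

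You then compensate for this by writing that Blomer's $\|F\|^2$ estimate ``is of order $D^{k-3}n^{1/2}$.'' But Lemma~\ref{Valup} gives $\|F\|^2 \ll ND^{k-3+\epsilon}$ with no $n$-dependence at all; what you have done is to replace $N$ by the upper bound $n^{1/2}$. That replacement is legitimate as a one-sided bound, and miraculously the missing $n^{1/2}/N$ in your Petersson estimate is exactly the factor you have smuggled into $\|F\|^2$ by writing $n^{1/2}$ for $N$. The two inaccuracies cancel, so your final displayed inequality $|\tau(n,Q)|\ll_\epsilon D^{(k-3)/2}n^{(k-1)/4}\gcd(n,D)^{1/4}n^\epsilon$ does agree with the lemma. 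Nevertheless the intermediate step is wrong: as written, your Petersson bound is false, and the correct argument must carry the factor $n^{(k-1)/2}/N$ so that the $N$ cancels cleanly against the $N$ in $\|F\|^2\ll ND^{k-3+\epsilon}$. The reduction of the ``in particular'' clause to the hypothesis $D\ll n^{(k-3)/(2(k-2))}$ is correct.
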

\begin{proof}
We take an orthonormal basis  $\{f_i  \}$ for the space of cusp forms  of  weight $k/2$ and level $N$. We write $F(z,Q)$ as a linear combination of them
$$F(Q,z)=\sum_{f_i}C(Q,f_i)f_i(z).$$
We obtain
$$\tau(n,Q)=\sum_{f_i}C(Q,f)\rho_{f_i}(n).$$
We apply Cauchy inequality to obtain
\begin{equation}\label{kkk}
|\tau(n,Q)|^2\leq\big(\sum_{f_i}|C(Q,f)|^2\big) \big(\sum_{f_i} |\rho_{f_i}(n)|^2  \big).\end{equation}
Since we take an orthonormal basis for the space of cusp forms we deduce that 
$$||F||^2=\sum_{f_i}|C(Q,f_i)|^2.$$
\\
We invoke a result of Blomer. The proof is included in Appendix 1.
\begin{lem}\label{l2}\label{Valup}
We have
$$\|F\|^2 \ll N^{\epsilon}(N^{k-2} +N^{\frac{k-3}{2}}D^{1/2}+D^{1-1/k} ). $$
for $k\geq 4$. As a corollary, 
\begin{equation}\label{kk}
\|F\|^2 \ll N D^{k-3+\epsilon}.
\end{equation}
\end{lem}

%

Next, we give an upper bound on the sum over the square norm of the $n$-th Fourier coefficients of  $\{f_i\}$

 $$\big(\sum_{f_i} |\rho_{f_i}(n)|^2  \big).$$
 We apply the Petersson trace formula for the modular forms of weight $k/2$ . See \cite[{Lemma~1} ]{Hen} 
$$\frac{\Gamma(k/2-1)}{(4\pi n)^{k/2-1}}\sum_{f_i} |\rho_{f_i}(n)|^2= 1+2\pi i^{-k/2}\sum_{c\equiv0 \text{ mod } N} c^{-1}J_{k/2-1}\big(\frac{4\pi n}{c} \big)K(n,n;c).$$
 In order to give an upper bound on $K(n,n;c)$, we apply the bound on Salie's sum for odd dimensions $k$ and Weil's bound on the  Kloosterman's sum for even dimensions $k$. We invoke the following formula \cite[Corollary 14.24 ]{Iwan}  
$$ \frac{\Gamma(l-1)}{(4\pi n)^{l-1}}\sum_{f_i} |\rho_{f_i}(n)|^2 = 1+ O\Big(\tau_3(n)\gcd(n,N)^{1/2}n^{1/2}\frac{\tau(N)}{N\sqrt{l}} \log\big(1+\frac{n^{1/2}}{\sqrt{Nl}}   \big)   ),
$$where $l=k/2$. Since the discriminant $D<n^{\frac{k-3}{2(k-2)}}$ and $N|D$ then 
\begin{equation}\label{k}
\sum_{f_i} |\rho_{f_i}(n)|^2 \ll_{\epsilon}\frac{n^{(k-1)/2}}{N}n^{\epsilon}\gcd(n,D)^{1/2}.
\end{equation}
From the inequalities (\ref{kkk}), (\ref{kk}) and (\ref{k}) we deduce that 
$$|\tau(n,Q)|^2\ll_{d,\epsilon} D^{k-3}n^{(k-1)/2}\gcd(n,D)^{1/2}n^{\epsilon}.$$
This concludes the first part of the lemma.
For the second part,  $D \ll n^{\frac{k-3}{2(k-2)}}$ implies that
$$D^{\frac{k-3}{2}}n^{\frac{k-1}{4}} \leq \frac{n^{\frac{k-2}{2}}}{\sqrt{D}}.$$
Hence,
$$|\tau(n,Q)|\ll c_{\epsilon}\frac{n^{\frac{k-2}{2}}}{\sqrt{D}}\gcd(D,n)^{1/4}n^{\epsilon}.$$
The main theorem (\ref{main}) is a consequence of lemma~(\ref{l2}) and lemma~(\ref{1}).
\end{proof}

%
%
%

%

\section{Appendix}
In this appendix, we include Valentin Blomer's paper to us. We are responsible for any gap or typo in this section.
\\
Let $$Q(X)=\frac{1}{2} X^{T}AX.$$ be a primitive positive definite integral $k$-dimensional quadratic form $(k\geq 3)$ of discriminant $D$ and level $N$. For $n \in \mathbb{N}$ let $r(n,Q)$ denote the number of representations of $n$ by $Q$ and let $\rho(n,Q)$ be the main term given by a formal application of the circle method.
%
%
Denote by $\mu_1, \dots \mu_k$ the successive minima (see \cite[Chapter 12]{Cassels} ) of $Q(X)$. We can write

$$Q(X):=h_1(x_1+c_{12}x_2+\dots+c_{1n}x_k)^2+\dots+h_kx_k^2,$$
where $c_j,h_j\in \mathbb{Q}$ such that
$$h_j\asymp \mu_j. $$
Let $\Theta(Q,z)$ denote the corresponding theta-series and $F(z,Q)$ the orthogonal projection onto the space of cusp forms. The following lemma uses only reduction theory and the Lipschitz principle.
\begin{lem}
We have
$$r(Q,n)\ll \Big(1+\frac{n^{1/2}}{\mu_3}+\frac{n}{(\mu_3\mu_4)^{1/2}}+\dots+\frac{n^{\frac{k-2}{2}}}{(\mu_3\dots \mu_k)^{1/2}}          \Big)n^{\epsilon},$$
and
$$\sum_{n\leq x}r(Q,n)\ll \max_{j\leq k} \frac{x^{\frac{j}{2}}}{(\mu_1\dots \mu_j)^{1/2}}.$$

\end{lem}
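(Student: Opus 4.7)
My plan is to exploit the triangular structure of the Jacobi/Gauss reduction $Q(X) = \sum_{j=1}^k h_j L_j(X)^2$, where $L_j(X) = x_j + c_{j,j+1}x_{j+1} + \dots + c_{j,k}x_k$ and $h_j \asymp \mu_j$. Because $L_j$ depends only on $x_j, \dots, x_k$, lattice points on (or inside) the ellipsoid defined by $Q$ can be counted by peeling off coordinates in the order $x_k, x_{k-1}, \dots$, a variant of the Lipschitz/Davenport principle.

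For the summatory estimate, $\sum_{n \leq x} r(Q,n)$ equals $\#\{X \in \mathbb{Z}^k : Q(X) \leq x\}$. Since $Q(X) \leq x$ forces $|L_j(X)| \ll \sqrt{x/\mu_j}$ for every $j$, fixing $x_k, x_{k-1}, \dots, x_{j+1}$ pins down $x_j$ to $O(1 + \sqrt{x/\mu_j})$ values. Multiplying,
$$\#\{X \in \mathbb{Z}^k : Q(X) \leq x\} \ll \prod_{j=1}^k \bigl(1 + \sqrt{x/\mu_j}\bigr) = \sum_{S \subseteq \{1,\dots,k\}} \prod_{j \in S} \sqrt{x/\mu_j}.$$
Since $\mu_1 \leq \mu_2 \leq \dots \leq \mu_k$, the summand of any given cardinality $|S| = j$ is maximized by taking $S = \{1, \dots, j\}$, producing $x^{j/2}/(\mu_1\cdots \mu_j)^{1/2}$. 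Bounding the whole sum by $2^k$ times the maximum yields the claim.

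For the pointwise estimate, the extra savings come from treating the last two variables by a binary-form count rather than a crude length bound. Fixing $(x_3, \dots, x_k)$ subject to the necessary condition $|x_j| \ll \sqrt{n/\mu_j}$ costs $\prod_{j=3}^k (1 + \sqrt{n/\mu_j})$ choices. For each such tuple, the equation $Q(X) = n$ reduces to an inhomogeneous binary equation
$$h_1 (x_1 + \alpha)^2 + h_2 (x_2 + \beta)^2 = m,$$
with $\alpha, \beta \in \mathbb{Q}$ and $m \geq 0$ determined by the fixed tail. The number of integer solutions $(x_1, x_2)$ is $O(n^\epsilon)$ by the standard divisor bound for representations by a positive definite binary quadratic form. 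Expanding $\prod_{j=3}^k (1 + \sqrt{n/\mu_j})$ as a sum over $S \subseteq \{3, \dots, k\}$ and keeping the largest subset of each size gives the stated bound.

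The main technical issue is ensuring that the $O(n^\epsilon)$ bound on the binary count is uniform in $Q$. Clearing denominators in the displayed equation, one reduces the problem to representations of an integer by a primitive integral positive definite binary quadratic form of discriminant $\asymp \mu_1 \mu_2$, restricted to a fixed residue class modulo the common denominator. By the classical theory of ideals in imaginary quadratic orders this count is $O_\epsilon((m \mu_1 \mu_2)^\epsilon)$, which is absorbed into $n^\epsilon$ so long as the successive minima grow at most polynomially in $n$, a condition that is manifestly satisfied whenever $Q(X)=n$ has any solution at all.
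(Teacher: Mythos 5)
Your proof follows the paper's argument essentially verbatim: decompose $Q$ via Gram--Schmidt/Jacobi reduction into $\sum h_j L_j(X)^2$ with $h_j\asymp\mu_j$ and $L_j$ depending only on $x_j,\dots,x_k$; for the summatory bound peel off all $k$ coordinates to get $\prod_j(1+\sqrt{x/\mu_j})$, and for the pointwise bound peel off only $x_k,\dots,x_3$ and handle the remaining inhomogeneous binary problem with an $O(n^\epsilon)$ divisor-type estimate (the paper cites a lemma of Blomer for exactly this step). The expansion of the product over subsets $S$ and the observation that the monotonicity $\mu_1\leq\cdots\leq\mu_k$ makes $S=\{1,\dots,j\}$ extremal is the right way to see the $\max_j$ bound.

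The one place where your argument has a genuine gap is the final paragraph, where you try to justify uniformity of the binary count. You bound the count by $O_\epsilon((m\,\mu_1\mu_2)^\epsilon)$ and then claim the $(\mu_1\mu_2)^\epsilon$ factor can be absorbed into $n^\epsilon$ because the successive minima are ``manifestly'' $\ll n^{O(1)}$ whenever $r(Q,n)>0$. That claim is false: take $Q(x_1,\dots,x_k)=x_1^2+M(x_2^2+\cdots+x_k^2)$ with $M$ arbitrarily large and $n$ a perfect square; then $r(Q,n)>0$ but $\mu_2=\cdots=\mu_k=M$ is unbounded in terms of $n$. The correct statement you want is that the number of integer solutions of a (shifted) positive definite binary equation inside the box $|x_1|\ll\sqrt{n/\mu_1}$, $|x_2|\ll\sqrt{n/\mu_2}$ is $O_\epsilon(n^\epsilon)$ \emph{uniformly} in the form, with no dependence on $\mu_1\mu_2$ at all; this is the content of the cited Blomer lemma (solutions correspond, after passing to primitive representations, to ideals of norm $\le n$ in an imaginary quadratic order, and the ideal-counting bound $O_\epsilon(n^\epsilon)$ does not see the discriminant). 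As written, your proposal introduces a spurious $(\mu_1\mu_2)^\epsilon$ and then removes it by an assertion that does not hold, so that step needs to be replaced by the uniform-in-discriminant bound.
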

\begin{proof}
We choose $x_k,x_{k-1},\dots, x_3$ in 
$$\ll \big(1+\frac{n}{\mu_k}\big)^{1/2}\dots \big( 1+\frac{n}{\mu_3}  \big)^{1/2},$$
ways. Then we are left with an inhomogeneous binary problem that has $O(n^{\epsilon})$ solutions (see \cite[Lemma 3a]{Michel}), uniformly for all choices of $x_k,\dots,x_3$. To prove the second part, we choose $x_k,\dots,x_1$ in 
$$\ll (1+\frac{x}{\mu_k})^{1/2}\dots (1+\frac{x}{\mu_1})^{1/2},$$
ways. 
\end{proof}
\begin{cor}\label{cor1}
We have
$$\sum_{n\leq x} r(Q,n)^2 \ll x^{\epsilon} \Big(x^{k-2}+\frac{x^{k-\frac{3}{2}}}{D^{1/2}}+ \frac{x^{k-1}}{D^{1-1/k}}   \Big).$$
\\
\end{cor}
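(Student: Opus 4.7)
My plan is to estimate the second moment by the elementary inequality
\[
\sum_{n\leq x} r(Q,n)^2 \;\leq\; \Big(\max_{n\leq x} r(Q,n)\Big)\cdot \sum_{n\leq x} r(Q,n),
\]
and to bound the two factors separately using the two estimates from the preceding lemma. This reduces the corollary to a purely combinatorial optimization over the successive minima $\mu_1\leq \mu_2\leq \cdots \leq \mu_k$ of $Q$.

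Set $R_i := x^{(i-2)/2}/(\mu_3\cdots\mu_i)^{1/2}$ for $2\leq i\leq k$ (with $R_2:=1$) and $S_j := x^{j/2}/(\mu_1\cdots\mu_j)^{1/2}$ for $1\leq j\leq k$. The pointwise bound of the previous lemma yields $\max_{n\leq x}r(Q,n)\ll x^\epsilon \max_i R_i$, and the averaged bound yields $\sum_{n\leq x}r(Q,n)\ll \max_j S_j$. Hence it suffices to bound $R_iS_j$ for every pair $(i,j)$ by one of the three terms on the right-hand side of the claimed inequality. The two structural facts I will repeatedly use are: (a) the Minkowski-type identity $\mu_1\mu_2\cdots\mu_k\asymp D$ coming from reduction theory, and (b) the lower bound $\mu_l\geq 1$ for every $l$, which follows from the fact that $Q$ is positive definite, integral, and primitive, so $Q(v)\in\mathbb Z_{\geq 1}$ on $\mathbb Z^k\setminus\{0\}$.

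I will split on the value of $i+j$. First, if $i+j\leq 2k-2$, then using $\mu_l\geq 1$ in every denominator gives $R_iS_j\leq x^{(i+j-2)/2}\leq x^{k-2}$. Second, if $i+j=2k-1$, i.e.\ $(i,j)\in\{(k-1,k),(k,k-1)\}$, a direct manipulation using $\mu_1\cdots\mu_k\asymp D$ produces $R_iS_j\asymp x^{k-3/2}(\mu_1\mu_2\mu_k)^{1/2}/D$; since $\mu_3\cdots\mu_{k-1}\geq 1$ we have $\mu_1\mu_2\mu_k\leq \mu_1\cdots\mu_k\asymp D$, and therefore $R_iS_j\ll x^{k-3/2}/D^{1/2}$. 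Third, if $i+j=2k$, i.e.\ $i=j=k$, the same manipulation yields $R_kS_k\asymp x^{k-1}(\mu_1\mu_2)^{1/2}/D$; applying AM--GM to $\mu_1\leq\mu_2\leq\cdots\leq\mu_k$ with product $\asymp D$ gives $\mu_1\mu_2\leq (\mu_1\cdots\mu_k)^{2/k}\asymp D^{2/k}$, hence $R_kS_k\ll x^{k-1}/D^{1-1/k}$. Taking the maximum over all $(i,j)$ therefore recovers the three-term bound claimed in the corollary (absorbing $x^{O(\epsilon)}$).

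The only real obstacle is bookkeeping: one must telescope $(\mu_3\cdots\mu_i)(\mu_1\cdots\mu_j)$ carefully against the full product $\mu_1\cdots\mu_k$ in the two extremal cases $i+j\in\{2k-1,2k\}$, being attentive to which minima appear squared, which appear once, and which drop out. Once those identities are written down, the key inequalities $\mu_1\mu_2\mu_k\leq D$ and $\mu_1\mu_2\leq D^{2/k}$ follow directly from the two structural facts (a) and (b), and no further input is needed.
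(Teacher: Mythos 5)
Your proof is correct. The paper states Corollary 4.2 without an explicit proof, as an immediate consequence of the preceding lemma, and your argument --- bounding $\sum_{n\leq x} r(Q,n)^2 \leq (\max_{n\leq x} r(Q,n))(\sum_{n\leq x} r(Q,n))$, then optimizing $R_iS_j$ over the successive minima using $\mu_\ell\geq 1$ and $\mu_1\cdots\mu_k\asymp D$ --- is exactly the intended deduction, and your bookkeeping in the three cases $i+j\leq 2k-2$, $i+j=2k-1$ (giving $x^{k-3/2}(\mu_1\mu_2\mu_k)^{1/2}/D\ll x^{k-3/2}/D^{1/2}$), and $i+j=2k$ (giving $x^{k-1}(\mu_1\mu_2)^{1/2}/D\ll x^{k-1}/D^{1-1/k}$) checks out.
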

Finally, we give a proof of Lemma~(\ref{Valup}). It suffices to show that
$$\|F(z,Q)\|^2 \ll N^{\epsilon}  (N^{k-2}+N^{\frac{k-3}{2}}D^{1/2}+D^{1-1/k}).  $$
Note that if  $k\geq 4$, then the right hand side is $\ll ND^{k-3+\epsilon}$.

\begin{proof}
We follow the argument of \cite[Lemma 4.2]{Val} or \cite[Lemma 3]{VB}. This gives
$$\|F(z,Q) \|^2 \ll N^{\epsilon}\Big(\frac{1}{N}+\int_{1/2}^{\infty} \sum_{n=1}^{\infty}\big( r(\hat{Q},n)^2+r_{Eis}(\hat{Q},n)^2   \big)e^{\frac{-4\pi ny}{N}} y^{k/2-2} dy   \Big),$$
where $\hat{Q}(X)=\frac{1}{2}X^T(NA^{-1})X$ has determinant $N^kD^{-1}$ and level $N$. Inserting Lemma \ref{1} and Corollary \ref{cor1}, we obtain after a short computation the desired bound.

\end{proof}

\bibliographystyle{alpha}
\bibliography{Preprint}

\end{document}